\numberwithin{equation}{section}
\def\Re{{\sf Re}\,}
\def\Im{{\sf Im}\,}
\def\1#1{\overline{#1}}
\def\2#1{\widetilde{#1}}
\def\3#1{\widehat{#1}}
\def\4#1{\mathbb{#1}}
\def\5#1{\frak{#1}}
\def\6#1{{\mathcal{#1}}}
\newcommand{\R}{\mathbb R}
\newcommand{\Ha}{\mathbb H}
\newcommand{\C}{\mathbb C}
\newcommand{\D}{\mathbb D}
\newcommand{\oD}{\overline{\mathbb D}}
\newcommand{\N}{\mathbb N}
\def\Re{{\sf Re}\,}
\def\Im{{\sf Im}\,}
\newcommand{\mcite}[1]{\csname b@#1\endcsname}
\theoremstyle{theorem}
\newtheorem*{theorem*}{Theorem}
\def\Re{{\sf Re}\,}
\def\Im{{\sf Im}\,}
\newtheorem{theorem}{Theorem}[section]
\newtheorem{lemma}[theorem]{Lemma}
\newtheorem{proposition}[theorem]{Proposition}
\newtheorem{corollary}[theorem]{Corollary}
\theoremstyle{definition}
\newtheorem{definition}[theorem]{Definition}
\theoremstyle{remark}
\numberwithin{equation}{section}
\title[Orthogonal convergence]{A characterization of orthogonal convergence in simply connected domains}
\author[F. Bracci]{Filippo Bracci$^\ast$}
\address{F. Bracci: Dipartimento di Matematica, Universit\`a di Roma ``Tor Vergata", Via della Ricerca
Scientifica 1, 00133, Roma, Italia.} \email{fbracci@mat.uniroma2.it}
\author[M. D. Contreras]{Manuel D. Contreras$^\dag$}
\author[S. D\'{\i}az-Madrigal]{Santiago D\'{\i}az-Madrigal$^\dag$}
\address{M. D. Contreras, S. D\'{\i}az-Madrigal: Camino de los Descubrimientos, s/n\\
Departamento de Matem\'{a}tica Aplicada~II and IMUS\\ Universidad de Sevilla\\ Sevilla,
41092\\ Spain.}\email{contreras@us.es} \email{madrigal@us.es}
\author[H. Gaussier]{Herv\'e Gaussier}
\address{H. Gaussier: Univ. Grenoble Alpes, CNRS, IF, F-38000 Grenoble, France.}\email{herve.gaussier@univ-grenoble-alpes.fr}
\subjclass[2010]{Primary 37C10, 30C35; Secondary 30D05, 30C80, 37F99, 37C25}
\keywords{orthogonal convergence; Riemann maps; semigroups of holomorphic functions}
\thanks{$^\dag$ Partially supported by the \textit{Ministerio
de Econom\'{\i}a y Competitividad} and the European Union (FEDER) MTM2015-63699-P and  by \textit{La Consejer\'{\i}a de Educaci\'{o}n y Ciencia de la Junta de Andaluc\'{\i}a}.}
\thanks{$^\ast$ Partially supported by the MIUR Excellence Department Project awarded to the  
Department of Mathematics, University of Rome Tor Vergata, CUP E83C18000100006}
\long\def\REM#1{\relax}
\begin{document}
\maketitle

\begin{abstract} Let $\mathbb D$ be the unit disc in $\mathbb C$ and let $f:\mathbb D \to \mathbb C$ be a Riemann map, $\Delta=f(\mathbb D)$. We give a necessary and sufficient condition in terms of hyperbolic distance and horocycles which assures that a  compactly divergent sequence $\{z_n\}\subset \Delta$ has the property that $\{f^{-1}(z_n)\}$ converges orthogonally to a point of $\partial \mathbb D$. We also give some applications of this to the slope problem for continuous semigroups of holomorphic self-maps of $\mathbb D$.
\end{abstract}

\tableofcontents

\section{Introduction}

A sequence $\{\zeta_n\}\subset \D:=\{\zeta\in \C: |\zeta|<1\}$ is said to converge {\sl orthogonally} to a point $\sigma\in \partial \D$ provided $\{\zeta_n\}$ converges to $\sigma$ and $\lim_{n\to \infty}\arg(1-\overline{\sigma}\zeta_n)=0$.

Let $\Delta\subsetneq \C$ be a simply connected domain, and $\{z_n\}\subset \Delta$ a sequence with no accumulation points in $\Delta$. Let $f:\D \to \Delta$ be a Riemann map. The aim of this note is to give an answer to the following question:

\medbreak
{\sl What are (useful) geometric conditions on $\Delta$ which detect whether $\{f^{-1}(z_n)\}$ converges orthogonally to a point $\sigma\in \partial \D$?}
\medbreak

The previous question, aside being interesting by itself, is particularly important in studying dynamics of  continuous semigroups of holomorphic self-maps of the unit disc (or more generally of holomorphic self-maps of the unit disc). Indeed, every semigroup of holomorphic self-maps of the unit disc has an essentially unique holomorphic model where the dynamical properties of the original semigroup translates into geometrical properties of the base domain of the model.

A similar question for non-tangential convergence has been settled in the recent paper \cite{BCDG}, using hyperbolic geometry.

A remarkable result in the direction of the previous question has been proved using harmonic measure theory by D. Betsakos \cite[Theorem 2]{Bet}:

\begin{theorem*}[Betsakos]
Let $\Delta\subsetneq \C$ be a simply connected domain starlike at infinity (namely $\Delta+it\subset \Delta$ for all $t\geq 0$) and $f:\D\to \Delta$ a Riemann map. If $\partial \Delta\subset\{z\in \C: a<\Re z<b, \Im z<c\}$ for some $-\infty<a<b<+\infty$ and $c\in \R$, then there exists $\tau\in \partial \D$ such that $f^{-1}(w+it)$ converges orthogonally to $\tau$ for all $w\in \Delta$.
\end{theorem*}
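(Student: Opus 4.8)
The plan is to turn the statement into a rigidity property for geodesics of the hyperbolic metric. The starting point is the elementary fact, checked by a direct computation in the half-plane model that sends $\tau$ to $\infty$, that a sequence $\{\zeta_n\}\subset\D$ converges orthogonally to $\tau\in\de\D$ if and only if $\zeta_n\to\tau$ and $k_\D(\zeta_n,\gamma)\to 0$, where $\gamma$ is any geodesic of $\D$ landing at $\tau$ (all of them are mutually asymptotic, so the choice is irrelevant). So the goal becomes to produce $\tau$ and such a geodesic. First I would observe that $\phi_t(z):=z+it$, $t\geq 0$, is a one-parameter semigroup of holomorphic self-maps of $\Delta$ with no interior fixed point; conjugating by $f^{-1}$ gives a one-parameter semigroup $(\psi_t)$ of $\D$ whose Denjoy--Wolff point $\tau\in\de\D$ satisfies $\psi_t(\zeta)\to\tau$ for every $\zeta\in\D$. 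Since $f^{-1}(w+it)=\psi_t(f^{-1}(w))$, this already gives $f^{-1}(w+it)\to\tau$ for all $w\in\Delta$, with the same $\tau$. It then remains to prove $k_\D(f^{-1}(w+it),\gamma)\to 0$, i.e.\ (by conformal invariance) $k_\Delta(w+it,\Gamma)\to 0$, where $\Gamma:=f(\gamma)$ is the geodesic of $\Delta$ landing at the prime end $\tau$. (Note that $(\psi_t)$ is parabolic of zero hyperbolic step, so this is exactly an instance of the slope problem.)

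Next I would read off the geometry forced by the hypotheses. Starlikeness at infinity together with $\de\Delta\subset\{\Im z<c\}$ gives $\{\Im z>c\}\subset\Delta$; and since $\{\Re z<a\}$ and $\{\Re z>b\}$ are connected, disjoint from $\de\Delta$, and meet $\{\Im z>c\}\subset\Delta$, they too lie in $\Delta$. Hence $\C\setminus\Delta$ is a connected closed set contained in the half-strip $\{a\leq\Re z\leq b,\ \Im z\leq c\}$, unbounded downward by simple connectivity. Moreover every upward vertical ray issued from a point of $\Delta$ stays in $\Delta$; fix $\ell:=\{\Re w+iv:\ v>c\}\subset\{\Im z>c\}$. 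The easy half of the estimate is an upper bound: since $\{\Im z>c\}\subset\Delta$ we have $k_\Delta\leq k_{\{\Im z>c\}}$ there, the vertical line $\{\Re z=\Re w\}$ is a geodesic of the half-plane $\{\Im z>c\}$, and an explicit computation gives $k_{\{\Im z>c\}}(w+it,\ell)\to 0$ as $t\to+\infty$, whence $k_\Delta(w+it,\ell)\to 0$. Combined with the triangle inequality, the theorem will follow once one knows that $\ell$ is asymptotically a geodesic of $\Delta$ at its upper end -- equivalently, that $f^{-1}(\ell)$ is asymptotic (hyperbolic distance tending to $0$) to a geodesic $\gamma$ of $\D$ landing at $\tau$: indeed, picking for each $t$ a point $p_t\in\ell$ nearest to $w+it$, we get $k_\Delta(w+it,p_t)\to 0$, and $p_t$ runs off along $\ell$ toward its upper end, where $k_\Delta(p_t,\Gamma)\to 0$; hence $k_\Delta(w+it,\Gamma)\to 0$.

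The main obstacle is precisely this rigidity step: showing that the vertical ray $\ell$ is asymptotically geodesic in $\Delta$, i.e.\ $\mathrm{len}_\Delta(\ell|_{[v_1,v_2]})-k_\Delta(\Re w+iv_1,\Re w+iv_2)\to 0$ as $v_1\to+\infty$. The upper bound on the length is the half-plane comparison above; what is delicate is the matching lower bound for $k_\Delta$, and this is where the full hypotheses enter: $\C\setminus\Delta$ is trapped below $\{\Im z=c\}$ and inside a strip of finite width $b-a$, so any competitor path in $\Delta$ joining two very high points of $\ell$ that descends toward $\{\Im z\leq c\}$, or drifts horizontally by more than $O(1)$, turns out to be hyperbolically much longer than the vertical segment. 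I would try to make this quantitative by a localization argument at the prime end $\tau$ (near which $\Delta$ contains the half-plane $\{\Im z>M\}$ while the missing part $\C\setminus\Delta$ stays, in the appropriate conformal sense, uniformly far away), or, when $\C\setminus\Delta$ contains an honest downward ray, by direct comparison with the hyperbolic metric of a slit plane, where everything is explicit. An essentially equivalent route is Betsakos's own: pass to harmonic measure, note that orthogonal convergence of $f^{-1}(w+it)$ amounts to $\omega(w+it,S^{+},\Delta)\to\frac12$, where $S^{+}$ is the $f$-image of one of the two arcs into which $\tau$ and $-\tau$ divide $\de\D$, and prove this equalization by monotonicity against the explicitly solvable model domains $\{\Re z<a\}\cup\{\Re z>b\}\cup\{\Im z>c\}$ and $\C$ minus a vertical ray. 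I expect the bookkeeping in this comparison to be the technical heart of the matter; everything else is soft.
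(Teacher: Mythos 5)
Your soft reductions are correct and match the paper's strategy in spirit: the paper also deduces Betsakos's theorem from a sandwich of $\Delta$ between two explicit domains (it uses $E^{K}_{z_0}(\infty,R)\subset\Delta\subset K$ with $K$ the Koebe domain $\C\setminus\{\Re z=s_0,\ \Im z\le s_1\}$, which after a conformal change becomes $\Ha+R\subset\Delta\subset\Ha$), and it also reformulates orthogonal convergence as $k_\D(f^{-1}(w+it),\gamma)\to0$ for a geodesic $\gamma$ landing at $\tau$. The geometric observations ($\{\Im z>c\}\subset\Delta$, the complement trapped in the half-strip, the trivial upper bound along the vertical ray $\ell$) are all fine.

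However, there is a genuine gap precisely where you say ``I would try\dots'' and ``I expect the bookkeeping\dots to be the technical heart of the matter.'' The entire difficulty of the theorem is the rigidity step you isolate: proving that points high up on $\ell$ are hyperbolically close \emph{in $\Delta$} to the geodesic $\Gamma$ of $\Delta$ landing at $\tau$, equivalently the matching lower bound for $k_\Delta$ between two far-out points of $\ell$. You list three candidate strategies (localization at the prime end, comparison with a slit plane, harmonic-measure equalization) but carry out none of them, so no proof of this step is given. For comparison, in the paper this is Proposition~\ref{orthog-in-semipl}, whose proof occupies Steps~1--5 of Section~\ref{theproof}: the key mechanism is that the two comparison half-planes $U=\Ha+1\subset\Delta\subset\Ha$ have hyperbolic distances along the real axis differing only by the explicitly computable correction $\frac12\log\frac{1-1/x_1}{1-1/x_0}$, which is uniformly small for $x_0$ large; consequently any geodesic of $\Delta$ joining $x_0$ and $x_1$ that leaves the sector $V(\beta)$ would, by Lemma~\ref{Lem:hyper-semipiano}(2) and (5), have $\ell_\Delta$-length at least $k_\Ha(x_0,x_1)+\log\frac1{\cos\beta}$, contradicting the squeeze (Step~2); a separate argument (Steps~1 and~3) prevents the geodesic from backtracking toward the origin, and a normal-families limit (Step~5) converts this into shadowing of $[0,+\infty)$ by a genuine geodesic ray. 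Some quantitative argument of this kind is indispensable --- the statement is false without the finite-width hypothesis $a<\Re z<b$ on $\partial\Delta$ (the slope need not even be a single point in general), so the conclusion cannot follow from the soft part of your argument alone. As it stands, your proposal is a correct reduction plus a research plan for the main estimate, not a proof.
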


In this paper we give a complete answer to the above question in terms of hyperbolic geometry and ``horocycles'' in the spirit of \cite{BG}, explaining also more geometrically Betsakos's theorem.

For the sake of clearness, we give here a definition of horocycles, referring the reader to Section \ref{horocycles} for details. Let $\underline{y}$ be a prime end of the simply connected domain $U\subsetneq \C$. Let $f:\D\to U$ be a Riemann map such that $1$ corresponds to $\underline{y}$ under $f$. Let $z_0:=f(0)$. For $R>0$, the {\sl horocycle} $E^\Delta_{z_0}(\underline{y},R)$ centered at $\underline{y}$, with base point $z_0$ and radius $R$, is given by $f(E(1,R))$, where $E(1,R):=\{z\in \D: |1-z|^2<R(1-|z|^2)\}$ is a classical horocycle in the unit disc.

If $U \subsetneq \mathbb C$ is a simply connected domain, we denote by $\partial_CU$ the set of prime ends of $U$ and by $\widehat{U}:=U \cup \partial_C U$, endowed with the Carath\'eodory prime ends topology, or {\sl Carath\'eodory topology of $U$} for short
(see \cite{CL} for precise definitions).

\vspace{1mm}
The main result of this paper is the following (see Section \ref{geodesics} for definitions and properties of hyperbolic distance and geodesics):

\begin{theorem}\label{Thm:main}
Let $\Delta\subsetneq \C$ be a simply connected domain, $f:\D \to \Delta$ a Riemann map. Let $\{z_n\}\subset \Delta$ be a sequence with no accumulation points in $\Delta$. Then there exists $\sigma\in \partial \D$ such that $\{f^{-1}(z_n)\}$ converges orthogonally to $\sigma$ if and only if  there exist a simply connected domain $U\subsetneq \C$, $z_0\in U$, $\underline{y}\in \partial_CU$ and $R>0$ such that
\begin{enumerate}
\item $E_{z_0}^U(\underline{y}, R)\subset\Delta\subseteq U$,
\item $\lim_{n\to \infty}k_U(z_n, \gamma([0,+\infty)))=0$, where $\gamma:[0,+\infty)\to U$ is any geodesic for the hyperbolic distance in $U$ such that $\lim_{t\to+\infty}\gamma(t)=\underline{y}$ in the Carath\'eodory topology of $U$.
\end{enumerate}
In particular,  $\gamma(t)$ is eventually contained in $\Delta$ and $f^{-1}(\gamma(t))$ converges orthogonally to~$\sigma$.
\end{theorem}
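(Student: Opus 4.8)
The plan is to reduce everything to the model case of the unit disc and the half-plane via the Riemann map $f$, and to exploit the conformal invariance of the hyperbolic distance together with the boundary behavior of horocycles. Recall that orthogonal convergence of $\{\zeta_n\}=\{f^{-1}(z_n)\}$ to $\sigma\in\partial\D$ means both $\zeta_n\to\sigma$ and $\arg(1-\overline\sigma\zeta_n)\to 0$; after composing with a rotation we may assume $\sigma=1$, and then via the Cayley transform $C(\zeta)=\frac{1+\zeta}{1-\zeta}$ sending $\D$ to the right half-plane $\Hr$, orthogonal convergence to $1$ translates exactly into the statement that $w_n:=C(\zeta_n)\to\infty$ with $\mathop{\mathsf{Re}}w_n\to+\infty$ and $\mathop{\mathsf{Re}}w_n/|w_n|\to 1$ (equivalently, $\mathop{\mathsf{Im}}w_n/\mathop{\mathsf{Re}}w_n\to 0$). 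This is the geometric picture I would work with throughout: horocycles in $\D$ at $1$ correspond to half-planes $\{\mathop{\mathsf{Re}}w>c\}$ in $\Hr$, and the key fact is that $w_n$ converges orthogonally iff its hyperbolic distance to a horizontal ray (a geodesic landing at $\infty$) tends to $0$ while $\mathop{\mathsf{Re}}w_n\to\infty$. I would record this equivalence as a preliminary lemma in the half-plane, which is an elementary computation with the explicit formula $k_{\Hr}(w_1,w_2)=\tfrac12\log\frac{|w_1-\overline{w_2}|+|w_1-w_2|}{|w_1-\overline{w_2}|-|w_1-w_2|}$.

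For the \emph{necessity} direction, suppose $\{\zeta_n\}$ converges orthogonally to $\sigma$. I would simply take $U:=\D$ (transported by $f$, i.e.\ $U=\Delta$ itself with the trivial Riemann map), or more precisely take the model to be $\Delta$ with $\underline y$ the prime end corresponding to $\sigma$ under $f$. Condition (1) holds trivially with any $R$ since $E^\Delta_{z_0}(\underline y,R)=f(E(1,R))\subset\Delta\subseteq\Delta=U$. Condition (2) is then the statement that $k_\Delta(z_n,\gamma([0,\infty)))\to 0$ where $\gamma$ is the image under $f$ of the geodesic of $\D$ landing at $\sigma$, i.e.\ the diameter ray $[0,\sigma)$. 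By conformal invariance, $k_\Delta(z_n,\gamma([0,\infty)))=k_\D(\zeta_n,[0,\sigma))$, and by the half-plane lemma above this quantity tends to $0$ precisely because $\{\zeta_n\}$ converges orthogonally. One must also verify that $\{z_n\}$ having no accumulation point in $\Delta$ is consistent (it forces $\zeta_n\to\partial\D$, and orthogonality then pins the limit to $\sigma$), and that $\gamma(t)$ lands at $\underline y$ in the Carathéodory topology — this is the standard fact that conformal maps are homeomorphisms for the prime-end topology.

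For the \emph{sufficiency} direction, assume a model $(U,z_0,\underline y,R)$ satisfying (1) and (2) is given. Let $g:\D\to U$ be the Riemann map with $g(0)=z_0$ and $g(1)=\underline y$ (in the prime-end sense), so that $\gamma=g\circ\delta$ where $\delta(t)=g^{-1}(\gamma(t))\to 1$ is the geodesic ray of $\D$ landing at $1$; we may take $\delta(t)=[0,1)$. The monotonicity of the hyperbolic distance under the inclusion $\Delta\subseteq U$ gives $k_U(z_n,\gamma([0,\infty)))\le k_\Delta(z_n,\gamma([0,\infty)))$ in one direction, but I need the reverse-type control, so the real content is: condition (1) says a full horocycle $E(1,R)$ (pulled to $U$) sits inside $\Delta$, and a Schwarz–Pick / localization argument near the prime end $\underline y$ shows that on the horocycle region the distances $k_U$ and $k_\Delta$ are comparable up to a bounded additive error, and in fact the difference tends to $0$ along any sequence escaping into the horocycle. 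Concretely, writing things in $\D$ via $g$, condition (2) says $k_\D(g^{-1}(z_n),[0,1))\to 0$, hence (using the half-plane lemma) $g^{-1}(z_n)$ converges orthogonally to $1$ in $\D$; in particular $g^{-1}(z_n)$ eventually enters every horocycle $E(1,\rho)$, so $z_n$ eventually lies in $E^U_{z_0}(\underline y,R)\subset\Delta$. Now I apply the Julia–Wolff–Carathéodory / horocycle comparison for the inclusion $E^U_{z_0}(\underline y,R)\subset\Delta\subseteq U$: since the hyperbolic metrics of the three domains are squeezed between one another and the outer and inner ones (those of $U$ and of the horocyclic sub-domain) have the \emph{same} boundary asymptotics at $\underline y$ up to $o(1)$, the distance $k_\Delta(z_n,\gamma([0,\infty)))$ also tends to $0$; transporting back by $f$ and invoking the half-plane lemma once more yields that $f^{-1}(z_n)$ converges orthogonally to the point $\sigma\in\partial\D$ corresponding to the prime end $f^{-1}(\underline y)$ — and this same argument applied to $\gamma(t)$ in place of $z_n$ gives the final assertion that $f^{-1}(\gamma(t))$ converges orthogonally to $\sigma$.

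The main obstacle I anticipate is the comparison step in the sufficiency direction: showing that the presence of a single horocycle $E^U_{z_0}(\underline y,R)$ inside $\Delta$ is enough to force $k_U$ and $k_\Delta$ to agree asymptotically (to within $o(1)$, not merely $O(1)$) along orthogonally-converging sequences. This is where the horocyclic geometry is essential — a half-plane $\{\mathop{\mathsf{Re}}w>c\}$ and the full plane $\Hr$ have hyperbolic metrics whose \emph{ratio} tends to $1$ as $\mathop{\mathsf{Re}}w\to\infty$ along a horizontal ray, but only because one stays a bounded hyperbolic distance from the geodesic; off the geodesic the estimate fails, which is exactly why condition (2) (closeness to the geodesic) cannot be dropped. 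I would isolate this as a separate quantitative lemma comparing $k_{\{\mathop{\mathsf{Re}}w>0\}}$ and $k_{\{\mathop{\mathsf{Re}}w>-c\}}$ near $\infty$ along the positive real axis, then transport it by the Riemann map $g$ using the standard boundary-distortion estimates for conformal maps at a prime end (Carathéodory's theorem plus the Koebe distortion theorem, which control $k_U$ near $\underline y$ in terms of the Euclidean geometry, uniformly on the horocycle).
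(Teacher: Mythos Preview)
Your necessity direction is fine and matches the paper's: take $U=\Delta$, any $R>0$, and the characterization ``orthogonal convergence $\Leftrightarrow$ hyperbolic distance to the radial geodesic tends to $0$'' does the job.

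The sufficiency direction, however, has a genuine gap. Your sandwich argument $E^U_{z_0}(\underline y,R)\subset\Delta\subset U$ together with the comparison of $k_U$ and $k_{E^U_{z_0}(\underline y,R)}$ near $\underline y$ does correctly yield $k_\Delta(z_n,\gamma([0,\infty)))\to 0$; this is exactly Lemma~\ref{Lem:contril-hyp-horo} in the paper, and it is the \emph{easy} step. The problem is the next sentence: ``transporting back by $f$ and invoking the half-plane lemma once more''. Your half-plane lemma says that $\zeta_n\to\sigma$ orthogonally iff $k_\D(\zeta_n,[0,\sigma))\to 0$. But what you have is $k_\D(f^{-1}(z_n),f^{-1}(\gamma))\to 0$, and $f^{-1}(\gamma)$ is \emph{not} a geodesic of $\D$: $\gamma$ is a geodesic of $U$, not of $\Delta$. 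So the lemma does not apply. Your attempt to close the loop by saying ``the same argument applied to $\gamma(t)$ in place of $z_n$'' is circular, since it reduces to showing $f^{-1}(\gamma(t))$ is close to a radial geodesic of $\D$, which is precisely the statement you are trying to prove.

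This missing step --- that $f^{-1}(\gamma(t))$ itself converges orthogonally when $\Ha+a\subset\Delta\subset\Ha$ and $\gamma(t)=t$ --- is the main technical content of the paper (Proposition~\ref{orthog-in-semipl}). The obstacle you isolate in your last paragraph is misidentified: the comparison $k_{\{\Re w>0\}}$ versus $k_{\{\Re w>-c\}}$ along the real axis is elementary and only shows that $t\mapsto t$ is an $(A,B)$-quasi-geodesic in $\Delta$ with $A\to 1$ as $t\to\infty$. Gromov's shadowing lemma then gives that the real axis stays within \emph{bounded} $k_\Delta$-distance of a true $\Delta$-geodesic, which yields non-tangential convergence of $f^{-1}(t)$ but \emph{not} orthogonal convergence. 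To upgrade to $o(1)$-closeness, the paper carries out a delicate direct argument (Steps~1--4 of the proof of Proposition~\ref{orthog-in-semipl}) showing that the $\Delta$-geodesic joining $x_0$ and $x_1$ on the real axis is trapped in arbitrarily thin angular sectors $V(\beta)$ (and bounded away from the origin) once $x_0$ is large enough, by playing lower bounds from $k_\Ha$ against upper bounds from $k_{\Ha+1}$. Neither Koebe distortion nor Julia--Wolff--Carath\'eodory supplies this; those give $O(1)$-level control, which is exactly one order too weak.
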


Betsakos' Theorem can be seen then as a consequence of Theorem~\ref{Thm:main}: the domain starlike at infinity $\Delta$ is contained in a Koebe domain $K:=\C\setminus \{z: \Re z=s_0, \Im z\leq s_1\}$ for some $s_0, s_1\in \R$, and the condition that $\partial \Delta$ is contained in a vertical semistrip implies that $\Delta$ contains a horocycle of $K$ centered at the prime end corresponding to ``infinity''. The line $t\mapsto s_0+it$ is a geodesic in $K$ (for $t>>1$) which converges to ``infinity'', and hence one gets the orthogonal convergence of $f^{-1}(s_0+it)$ as $t\to+\infty$ (and of $f^{-1}(w+it)$ for all $w\in \Delta$ since $k_K(s_0+it, w+it)\to 0$ as $t\to+\infty$).

Another immediate consequence of Theorem \ref{Thm:main} is the following: if $\Delta\subsetneq \C$ is a simply connected domain such that $(\Ha+a)\subset \Delta\subset \Ha$ where $\Ha:=\{z\in \C: \Re z>0\}$ and $a>0$, then $f^{-1}(t)$ converges orthogonally to a point $\tau\in \partial \D$ as $t\to+\infty$.

As we mentioned before, Theorem \ref{Thm:main} has direct applications to the study of the so-called ``slope problem'' for continuous semigroups of holomorphic self-maps of the unit disc (or more generally, for discrete iteration of holomorphic self-maps of the unit disc). We discuss this in Section \ref{semigroups}.

The proof of Theorem \ref{Thm:main} is rather involved. Using invariance of hyperbolic objects by Riemann mappings, one can essentially reduce to the case  $(\Ha+1)\subset \Delta\subset \Ha$. In this case, the curve $\Gamma:(0,+\infty)\ni t\mapsto t\in \C$, for $t$ sufficiently large, is a uniform quasi-geodesic in $\Delta$ in the sense of Gromov. This implies easily that $f^{-1}(\Gamma)$ converges non-tangentially to a point $\sigma\in \partial \D$. However, in order to show that the convergence is orthogonal, one has to make careful estimates of the hyperbolic distance, proving that the geodesic of $\Delta$ which ``shadows'' $\Gamma$ becomes closer and closer in the hyperbolic distance of $\Delta$ to $\Gamma$.  This is the content of Section \ref{theproof}.

\section{Geodesics and quasi-geodesics in simply connected domains}\label{geodesics}

Let $\Delta\subsetneq \C$ be a simply connected domain. We denote by $\kappa_\Delta$ the infinitesimal metric in $\Delta$, that is,  for $z\in \Delta$, $v\in \C$, we let
\[
\kappa_\Delta(z;v):=\frac{|v|}{f'(0)},
\]
where $f:\D\to \Delta$ is the Riemann map such that $f(0)=z$, $f'(0)>0$. The hyperbolic distance $k_\Delta$ in $\Delta$ is defined for $z, w\in \Delta$ as
\[
k_\Delta(z,w):=\inf \int_0^1 \kappa_\Delta(\gamma(t);\gamma'(t))dt,
\]
where the infimum is taken over all piecewise smooth curves $\gamma:[0,1]\to \Delta$ such that $\gamma(0)=z, \gamma(1)=w$.

Let  $-\infty<a<b<+\infty$ and let $\gamma:[a,b]\to \Delta$ be a piecewise $C^1$-smooth curve. For $a\leq s\leq t\leq b$, we define the {\sl hyperbolic length of $\gamma$ in $\Delta$} between $s$ and $t$ as
\[
\ell_\Delta(\gamma;[s,t]):=\int_s^t \kappa_\Delta(\gamma(u);\gamma'(u))du.
\]
In case the length is computed in all the interval $[a,b]$ of definition of the curve, we will simply write
\[
\ell_\Delta(\gamma):=\ell_\Delta(\gamma;[a,b]).
\]
In particular recall that, if $\Ha:=\{z\in \C: \Re z>0\}$, then for every $z, w\in \Ha$,
\begin{equation}\label{Eq:dist-hyp-H}
k_\Ha(z,w):=\frac{1}{2}\log \frac{1+\left|\frac{z-w}{z+\overline{w}} \right|}{1-\left|\frac{z-w}{z+\overline{w}} \right|}.
\end{equation}

\begin{definition}
Let $\Delta\subsetneq \C$ be a simply connected domain.  A    $C^1$-smooth curve $\gamma:(a,b)\to \Delta$, $-\infty\leq a<b\leq +\infty$, such that $\gamma'(t)\neq 0$ for all $t\in (a,b)$ is called a {\sl geodesic} of $\Delta$  if for every $a< s\leq t< b$,
\[
\ell_\Delta(\gamma;[s,t])=k_\Delta(\gamma(s), \gamma(t)).
\]
Moreover, if $z,w\in \Delta$ and there exist $a<s<t<b$ such that $\gamma(s)=z$ and $\gamma(t)=w$, we say that $\gamma|_{[s,t]}$ is a geodesic which joins $z$ and $w$.

With a slight abuse of notation, we also call geodesic the image of $\gamma$ in $\Delta$.
\end{definition}

Using Riemann maps and the invariance of hyperbolic metric and distance for biholomorphisms, we have the following result (see, {\sl e.g.}, \cite{CL} for the definition and properties of the Carath\'eodory's prime ends topology):

\begin{proposition}\label{Prop:geodesic-in-simply}
Let $\Delta\subsetneq \C$ be a simply connected domain. Let $-\infty\leq a<b\leq +\infty$.
\begin{enumerate}
\item If $\eta:(a,b) \to \Delta$ is a geodesic,  then
\[
\eta(a):=\lim_{t\to a^+}\eta(t), \quad \eta(b):=\lim_{t\to b^-}\eta(t)
\]
 exist as limits in the Carath\'eodory topology of $\Delta$. Moreover, if $\eta(a), \eta(b)\in \Delta$ then
 \[
k_\Delta(\eta(a),\eta(b))=\lim_{\epsilon\to 0^+}\ell_{\Delta}(\eta;[a+\epsilon,b-\epsilon]).
 \]
\item If $\eta:(a,b) \to \Delta$ is a geodesic such that $\eta(a), \eta(b)\in \partial_C \Delta$, then $\eta(a)\neq \eta(b)$.
\item For any $z,w\in \widehat{\Delta}$, $z\neq w$, there exists a real analytic geodesic $\gamma:(a,b)\to \Delta$ such that $\gamma(a)=z$ and $\gamma(b)=w$. Moreover, such a geodesic is essentially unique, namely, if $\eta:(\tilde a, \tilde b)\to \Delta$ is another geodesic joining $z$ and $w$, then $\gamma([a,b])=\eta([\tilde a,\tilde b])$ in $\widehat{\Delta}$.
\item If  $\gamma:(a,b)\to \Delta$ is a geodesic such that either $\gamma(a)\in \Delta$ or $\gamma(b)\in \Delta$ (or both), then there exists a geodesic $\eta:(\tilde a,\tilde b)\to \Delta$ such that $\eta(\tilde a), \eta(\tilde b)\in \partial_C \Delta$ and such that $\gamma([a,b])\subset \eta([\tilde a, \tilde b])$ in $\widehat{\Delta}$.
\item If  $\gamma:(a,b)\to \Delta$ is a geodesic such that  $\gamma(a)\in \partial_C\Delta$ then the cluster set $\Gamma(\gamma,a)=\Pi(\gamma(a))$, the principal part of the prime end $\gamma(a)$ (and similarly for $b$ in case $\gamma(b)\in  \partial_C\Delta$).
\end{enumerate}
\end{proposition}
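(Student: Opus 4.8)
The plan is to pull everything back to the unit disc $\D$ by the Riemann map and then appeal to the classical description of the geodesics of $(\D,k_\D)$. Fix a Riemann map $f:\D\to\Delta$. From the definition of $\kappa_\Delta$ and the invariance of $\kappa_\D$ under automorphisms of $\D$ one gets $\kappa_\Delta(f(z);f'(z)v)=\kappa_\D(z;v)$ for all $z\in\D$, $v\in\C$; hence $f$ is an isometry from $(\D,k_\D)$ onto $(\Delta,k_\Delta)$ and $\ell_\Delta(f\circ\alpha;[s,t])=\ell_\D(\alpha;[s,t])$ for every piecewise $C^1$ curve $\alpha$ in $\D$. Consequently $\gamma$ is a geodesic of $\Delta$ if and only if $f^{-1}\circ\gamma$ is a geodesic of $\D$. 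Moreover, by Carath\'eodory's extension theorem $f$ extends to a homeomorphism $\widehat f:\overline\D\to\widehat\Delta$ of Carath\'eodory compactifications --- for the Jordan domain $\D$ one has $\widehat\D=\overline\D$ with the Euclidean topology, the prime ends being exactly the boundary points --- so limits and cluster sets in the Carath\'eodory topology of $\Delta$ correspond under $\widehat f$ to Euclidean limits and cluster sets in $\overline\D$. Thus it suffices to prove the five statements for $\D$, where I will use the standard facts that the geodesics of $(\D,k_\D)$ are precisely the diameters and the sub-arcs of circles meeting $\partial\D$ orthogonally; that each such complete geodesic is a real-analytic curve meeting $\partial\D$ in two distinct points; that through any two distinct points of $\overline\D$ there passes exactly one complete geodesic; and that a $C^1$ curve with non-vanishing derivative satisfying $\ell_\D(\gamma;[s,t])=k_\D(\gamma(s),\gamma(t))$ for all $s\le t$ is injective (by additivity of length) and is, up to monotone reparametrization, a sub-arc of a complete geodesic, the latter being uniquely determined by $\gamma$.

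Granting this, (1) follows by writing $\gamma(t)=h(\phi(t))$, where $h:\R\to\D$ is a unit-speed parametrization of the underlying complete geodesic and $\phi$ is strictly monotone: $\phi(t)$ tends to a limit in $[-\infty,+\infty]$ as $t\to a^+$ (resp. $t\to b^-$), so $\gamma(t)$ has a Euclidean limit in $\overline\D$, namely an interior point if the limit of $\phi$ is finite and an endpoint of $h$ on $\partial\D$ otherwise; transporting by $\widehat f$ gives the Carath\'eodory limits $\eta(a),\eta(b)$. For the ``moreover'' part, the definition of geodesic gives $\ell_\Delta(\eta;[a+\epsilon,b-\epsilon])=k_\Delta(\eta(a+\epsilon),\eta(b-\epsilon))$, and letting $\epsilon\to0^+$ and using continuity of $k_\Delta$ on $\Delta\times\Delta$ gives the claim. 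Statement (2) is just the fact that the two points in which a circle orthogonal to $\partial\D$ (or a diameter) meets $\partial\D$ are distinct. In (3), existence is that of the unique complete geodesic through the two given distinct points of $\overline\D$, parametrized real-analytically on an open interval so that the two points are the Carath\'eodory limits at the endpoints; uniqueness holds because any geodesic joining $z$ and $w$ is injective and its image lies in a complete geodesic through $z$ and $w$ --- hence in that unique one --- and by monotonicity of the parameter its image is exactly the sub-arc between $z$ and $w$. Finally (4) is obtained by passing from the sub-arc traced by $\gamma$ to the whole complete geodesic containing it, which has both endpoints on $\partial\D$, and applying $\widehat f$.

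Statement (5) is the only point that is not pure disc geometry. Under $f^{-1}$ the geodesic $\gamma$ ending at the prime end $\gamma(a)$ becomes a sub-arc of a circle meeting $\partial\D$ orthogonally at $p:=\widehat f^{-1}(\gamma(a))\in\partial\D$; orthogonality means this arc is tangent to the radius at $p$, so it approaches $p$ non-tangentially. The identity $\Gamma(\gamma,a)=\Pi(\gamma(a))$ is then precisely the classical identification of the principal part of the prime end of $\Delta$ corresponding to $p$ with the cluster set of the Riemann map $f$ along such an arc (equivalently, along the radius at $p$); see \cite{CL}. I expect that the main points needing care are exactly this invocation of prime-end theory in (5) and the bookkeeping of parametrizations and of the Carath\'eodory-versus-Euclidean topology in the reduction step; everything else is the textbook description of the hyperbolic geodesics of the disc.
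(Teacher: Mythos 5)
Your argument is correct and is exactly the route the paper intends: the paper states this proposition without proof, asserting only that it follows from the isometric invariance of the hyperbolic metric under Riemann maps together with the classical description of the geodesics of $\D$ and standard prime-end theory, which is precisely what you carry out. The one point to make fully explicit is in (5): the classical theorem identifies $\Pi(\gamma(a))$ with the \emph{radial} cluster set of $f$, and to replace the radius by the orthogonal arc you should note that two geodesics of $\D$ ending at the same boundary point are hyperbolically asymptotic, so their images under $f$ have the same Euclidean cluster sets.
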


Given a simply connected domain, it is in general a hard task to find geodesics. The aim of this section is indeed to recall a powerful method due to Gromov to localize geodesics via simpler curves which are called quasi-geodesics.

\begin{definition}
Let $\Delta\subsetneq \C$ be a simply connected domain. Let $A\geq 1$ and $B\geq 0$. A piecewise $C^1$-smooth curve $\gamma:[a,b]\to \Delta$, $-\infty<a<b<+\infty$, is a {\sl $(A,B)$-quasi-geodesic} if  for every $a\leq s\leq t\leq b$,
\[
\ell_\Delta(\gamma; [s,t])\leq A k_\Delta(\gamma(s),\gamma(t))+B.
\]
\end{definition}

The importance of quasi-geodesics is contained in the following result (see, {\sl e.g.} \cite{Ghys}):

\begin{theorem}[Gromov's shadowing lemma]\label{Gromov}
For every $A\geq 1$ and $B\geq 0$ there exists $\delta=\delta(A,B)>0$ with the following property. Let  $\Delta\subsetneq \C$ be any simply connected domain. If $\gamma:[a,b]\to \Delta$ is a $(A,B)$-quasi-geodesic, then there exists a geodesic $\tilde\gamma:[\tilde a, \tilde b]\to \Delta$ such that $\tilde\gamma(\tilde a)=\gamma(a)$, $\tilde\gamma(\tilde b)=\gamma(b)$ and for every $u\in [a,b]$ and $v\in [\tilde a, \tilde b]$,
\[
k_\Delta(\gamma(u), \tilde\gamma([\tilde a, \tilde b]))<\delta, \quad k_\Delta(\tilde\gamma(v),\gamma([ a,  b]))< \delta.
\]
\end{theorem}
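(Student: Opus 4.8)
This is the classical stability theorem for quasi-geodesics in Gromov hyperbolic spaces (Gromov; see \cite{Ghys}), and in our setting I would prove it as follows. \emph{Reduction to the disc.} First I would fix a Riemann map $g:\D\to\Delta$. By the very definition of $\kappa_\Delta$ and the conformal invariance of the Riemann map, $g$ is an isometry of $(\D,k_\D)$ onto $(\Delta,k_\Delta)$ which carries hyperbolic lengths to hyperbolic lengths, geodesics to geodesics, $(A,B)$-quasi-geodesics to $(A,B)$-quasi-geodesics with the \emph{same} constants, and which preserves the two Hausdorff-type estimates in the statement. Hence it suffices to prove the theorem with $\Delta=\D$ equipped with the Poincar\'e metric. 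The crucial gain is that the ambient space is now \emph{fixed}: $(\D,k_\D)$ is a model of the hyperbolic plane, in particular a complete geodesic space which is $\delta_0$-hyperbolic (i.e.\ has $\delta_0$-thin triangles) for some \emph{universal} $\delta_0>0$. Because of this, the constant $\delta$ produced below will depend only on $A$ and $B$.

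\emph{Step 1: the geodesic stays near the quasi-geodesic.} Let $\gamma:[a,b]\to\D$ be an $(A,B)$-quasi-geodesic; being piecewise $C^1$ it is continuous, so the usual ``taming'' step is not needed. Let $\widetilde{\gamma}:[\tilde a,\tilde b]\to\D$ be a geodesic joining $\gamma(a)$ to $\gamma(b)$, which exists by Proposition~\ref{Prop:geodesic-in-simply}, and set $D:=\max_{v}k_\D(\widetilde{\gamma}(v),\gamma([a,b]))$, attained at $p:=\widetilde{\gamma}(v_0)$. I would take points $y_\pm$ on $\widetilde{\gamma}$ at hyperbolic distance $2D$ from $p$ on either side (clipped to $\gamma(a)$, resp.\ $\gamma(b)$, if $\widetilde{\gamma}$ is shorter), pick parameters $s\le t$ with $\gamma(s),\gamma(t)$ within $D$ of $y_-,y_+$ (possible since $D$ is the maximal distance from $\widetilde{\gamma}$ to $\gamma([a,b])$), and form the path $\sigma$ from $y_-$ to $y_+$ obtained by concatenating $[y_-,\gamma(s)]$, the quasi-geodesic arc $\gamma|_{[s,t]}$, and $[\gamma(t),y_+]$. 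Then $k_\D(\gamma(s),\gamma(t))\le 6D$, so the quasi-geodesic inequality gives $\ell_\D(\gamma|_{[s,t]})\le 6AD+B$, whence $\ell_\D(\sigma)\le 8AD+B$ (using $A\ge1$); on the other hand every point of $\sigma$ lies at distance $\ge D$ from $p$ --- the arc $\gamma|_{[s,t]}$ because $D$ is the maximal distance from $p$ to all of $\gamma([a,b])$, the two short segments because each has diameter $\le D$ while one endpoint is $2D$ away from $p$. Now apply the elementary logarithmic lemma in a $\delta_0$-hyperbolic space: a point of a geodesic joining the endpoints of a rectifiable path $\sigma$ lies within $\delta_0\log_2\ell_\D(\sigma)+1$ of the image of $\sigma$. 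Since $p$ lies on the subarc of $\widetilde{\gamma}$ between $y_-$ and $y_+$, which is a geodesic with those endpoints, this yields $D\le\delta_0\log_2(8AD+B)+1$; the right-hand side being logarithmic, this forces $D\le\delta_1$ for a constant $\delta_1=\delta_1(A,B)$, i.e.\ $\widetilde{\gamma}([\tilde a,\tilde b])$ lies in the $\delta_1$-neighbourhood of $\gamma([a,b])$.

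\emph{Step 2: the quasi-geodesic stays near the geodesic.} Here I would run a maximal-interval argument, after first enlarging $\delta_1$ so that $\cosh\delta_1>2A$. Given $u_0\in[a,b]$ with $k_\D(\gamma(u_0),\widetilde{\gamma})>\delta_1$, let $[u_1,u_2]\ni u_0$ be the maximal interval on which $\gamma$ stays at distance $\ge\delta_1$ from $\widetilde{\gamma}$; since $\gamma(a),\gamma(b)$ lie on $\widetilde{\gamma}$ one gets $a<u_1\le u_2<b$ and, by continuity, $k_\D(\gamma(u_i),\widetilde{\gamma})=\delta_1$, realised at $w_i\in\widetilde{\gamma}$. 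The key point, specific to the hyperbolic plane, is that the nearest-point projection $\pi$ onto the geodesic $\widetilde{\gamma}$ (well defined, geodesic segments being convex) contracts the length of a curve lying at distance $\ge\delta_1$ from $\widetilde{\gamma}$ by a factor at least $\cosh\delta_1$ --- a one-line computation in Fermi coordinates along $\widetilde{\gamma}$. Since $\pi(\gamma(u_i))=w_i$, this gives
\[
k_\D(w_1,w_2)\le\frac{\ell_\D(\gamma|_{[u_1,u_2]})}{\cosh\delta_1}
\le\frac{A\,k_\D(\gamma(u_1),\gamma(u_2))+B}{\cosh\delta_1}
\le\frac{A\,(2\delta_1+k_\D(w_1,w_2))+B}{\cosh\delta_1},
\]
and, because $\cosh\delta_1>2A$, this bounds $k_\D(w_1,w_2)$, hence $k_\D(\gamma(u_1),\gamma(u_2))$, hence $\ell_\D(\gamma|_{[u_1,u_2]})$, and finally $k_\D(\gamma(u_0),\widetilde{\gamma})\le k_\D(\gamma(u_0),\gamma(u_1))+\delta_1$, by a constant $\delta_2=\delta_2(A,B)$. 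Thus $k_\D(\gamma(u),\widetilde{\gamma})\le\max\{\delta_1,\delta_2\}$ for every $u\in[a,b]$. Setting $\delta:=\max\{\delta_1,\delta_2\}+1$ and transporting both inequalities back through $g$ completes the proof.

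\emph{Main obstacle.} The reduction and the logarithmic lemma are routine, and Step~1 is the conceptual core; the genuinely delicate part is Step~2, where one must keep track of the bookkeeping in the maximal-interval argument and, above all, make sure that every constant that appears depends only on $A$ and $B$ --- not on $\Delta$, on $\gamma$, nor on the particular geodesic $\widetilde{\gamma}$ chosen.
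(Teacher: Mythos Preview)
Your argument is correct; it is essentially the standard Morse--Gromov stability proof, streamlined by the reduction to the fixed model $(\D,k_\D)$, which makes the hyperbolicity constant universal and hence $\delta=\delta(A,B)$ automatic. The paper, however, does not prove this theorem at all: it is quoted as a known result with a reference to \cite{Ghys}, so there is no ``paper's own proof'' to compare against. One small remark on your Step~2: the projection-contraction argument via Fermi coordinates is a pleasant use of the CAT$(-1)$ geometry available after the reduction to $\D$, and it is slightly slicker here than the purely coarse connectedness argument one finds in \cite{Ghys}; either route gives the same conclusion with constants depending only on $A$, $B$, and the universal thinness constant of $\D$.
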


A consequence of Gromov's shadowing lemma we will take advantage of is the following result, whose proof is based on standard arguments of normality:

\begin{corollary}\label{Cor:shadow}
Let  $\Delta\subsetneq \C$ be a simply connected domain. Let $\gamma:[0,+\infty)\to \Delta$ be a piecewise $C^1$-smooth curve such that $\lim_{t\to +\infty}k_\Delta(\gamma(0), \gamma(t))=+\infty$ and there exist $A\geq 1$, $B\geq 0$, such that for every fixed $T>0$ the curve $[0,T]\ni t\mapsto \gamma(t)$ is a $(A,B)$-quasi-geodesic. Then there exists a prime end $\underline{x}\in \partial_C\Delta$ such that $\gamma(t)\to \underline{x}$ in the Carath\'eodory topology of $\Delta$ as $t\to +\infty$. Moreover, there exists $\epsilon>0$ such that, if $\eta:[0,+\infty)\to \Delta$ is the geodesic of $\Delta$ parameterized by arc length such that $\eta(0)=\gamma(0)$ and $\lim_{t\to+\infty}\eta(t)=\underline{x}$ in the Carath\'eodory topology of $\Delta$, then, for every $t\in [0,+\infty)$,
\begin{equation*}
k_\Delta(\gamma(t), \eta([0,+\infty))<\epsilon, \quad k_\Delta(\eta(t), \gamma([0,+\infty))<\epsilon.
\end{equation*}
\end{corollary}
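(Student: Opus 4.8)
The plan is to build, for each fixed $T>0$, a geodesic segment that shadows $\gamma|_{[0,T]}$ via Gromov's shadowing lemma (Theorem \ref{Gromov}), and then pass to a limit as $T\to+\infty$ using a normal families argument. First I would fix the constants $A,B$ coming from the hypothesis and let $\delta=\delta(A,B)>0$ be the shadowing constant. For each $n\in\N$ apply Theorem \ref{Gromov} to the $(A,B)$-quasi-geodesic $\gamma|_{[0,n]}$ to obtain a geodesic $\tilde\gamma_n:[\tilde a_n,\tilde b_n]\to\Delta$ with $\tilde\gamma_n(\tilde a_n)=\gamma(0)$, $\tilde\gamma_n(\tilde b_n)=\gamma(n)$, and the two-sided $\delta$-fellow-traveling property. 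Reparametrize each $\tilde\gamma_n$ by arc length (in the hyperbolic metric) starting at $\gamma(0)$, so that $\tilde a_n=0$; note $\tilde b_n=k_\Delta(\gamma(0),\gamma(n))\to+\infty$ by hypothesis, so the domains of definition exhaust $[0,+\infty)$.

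Next I would extract a limit geodesic. Transporting everything to the disc via a Riemann map $f:\D\to\Delta$ with $f(0)=\gamma(0)$, the curves $f^{-1}\circ\tilde\gamma_n$ are geodesics of $\D$ through $0$ parametrized by arc length, i.e. diameters $t\mapsto e^{i\theta_n}\tanh(t/2)$ (up to the standard correspondence); after passing to a subsequence $\theta_n\to\theta$, these converge locally uniformly to the geodesic ray $\eta:=f\circ(t\mapsto f^{-1}$-image of $e^{i\theta}\tanh(t/2))$, which is a geodesic of $\Delta$ parametrized by arc length with $\eta(0)=\gamma(0)$. By Proposition \ref{Prop:geodesic-in-simply}(1),(5), $\eta$ has a limit $\underline x:=\eta(+\infty)\in\partial_C\Delta$ in the Carath\'eodory topology. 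The two-sided $\delta$-estimates pass to the limit: for each fixed $t$, $k_\Delta(\eta(t),\gamma([0,+\infty)))\le\delta$, and for each fixed $s$, $k_\Delta(\gamma(s),\tilde\gamma_n([0,\tilde b_n]))<\delta$ for all large $n$; since on compact parameter sets $\tilde\gamma_n\to\eta$ uniformly and the far-away parts of $\tilde\gamma_n$ stay close to $\eta([0,+\infty))$ (because any two geodesic rays from $\gamma(0)$ landing at nearby prime ends stay within bounded hyperbolic distance — a Gromov-hyperbolicity/thin-triangles fact, or directly in the disc), one gets $k_\Delta(\gamma(s),\eta([0,+\infty)))\le\delta+C$ for a uniform constant; take $\epsilon:=\delta+C+1$.

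It remains to see that $\gamma(t)\to\underline x$ in the Carath\'eodory topology and that $\eta$ is independent of the chosen subsequence. For the convergence: since $k_\Delta(\gamma(0),\gamma(t))\to+\infty$ and $\gamma(t)$ stays within $\epsilon$ of $\eta([0,+\infty))$, the point $\gamma(t)$ lies hyperbolically close to $\eta(s_t)$ with $s_t\to+\infty$; transporting to the disc, $f^{-1}(\gamma(t))$ stays within a bounded hyperbolic distance of the radius $\{re^{i\theta}\}$ while its modulus tends to $1$, hence $f^{-1}(\gamma(t))\to e^{i\theta}$ non-tangentially, which in the Carath\'eodory topology of $\Delta$ means $\gamma(t)\to\underline x$. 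This also pins down $\underline x$ (hence $\eta$, by essential uniqueness in Proposition \ref{Prop:geodesic-in-simply}(3)) independently of the subsequence. I expect the main obstacle to be the last displayed estimate — controlling the \emph{far} part of each shadowing geodesic $\tilde\gamma_n$, i.e. showing that $\gamma(s)$ is close not just to \emph{some} $\tilde\gamma_n$ but to the \emph{limit} ray $\eta$; this is exactly where one uses that $\Delta$ is Gromov hyperbolic with thin triangles (equivalently, the explicit geometry of geodesics in $\D$), so that geodesic rays from a common point with nearby endpoints are uniformly Hausdorff-close in the hyperbolic metric.
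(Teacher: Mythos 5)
Your proposal is essentially the argument the paper has in mind: the paper gives no proof of Corollary~\ref{Cor:shadow}, saying only that it follows from Gromov's shadowing lemma ``by standard arguments of normality'', and your plan (shadow each $\gamma|_{[0,n]}$ by a geodesic segment from $\gamma(0)$ to $\gamma(n)$, transport to $\D$ where these become radial segments $t\mapsto e^{i\theta_n}\tanh$, extract $\theta_n\to\theta$, and pass the two-sided $\delta$-estimates to the limit ray) is exactly that argument. One remark on the step you flag as the main obstacle: the fact you invoke --- that geodesic rays from a common point landing at nearby prime ends are uniformly Hausdorff-close --- is not literally true (two distinct radii of $\D$ diverge), but you do not actually need it. For fixed $s$, any point of $\tilde\gamma_n$ within $\delta$ of $\gamma(s)$ has arc-length parameter at most $k_\Delta(\gamma(0),\gamma(s))+\delta$, a bound independent of $n$; hence the nearest points all lie in a compact parameter range on which $\tilde\gamma_n\to\eta$ uniformly, and a subsequence of them converges to a point of $\eta([0,+\infty))$ at distance $\le\delta$ from $\gamma(s)$. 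So no control of the far parts of the $\tilde\gamma_n$ is required, and one can take $\epsilon$ to be any number exceeding $\delta$. The rest of your outline (identification of the prime end $\underline{x}$ via non-tangential convergence of $f^{-1}(\gamma(t))$ to $e^{i\theta}$, and independence of the subsequence by essential uniqueness of the geodesic ray, Proposition~\ref{Prop:geodesic-in-simply}(3)) is correct.
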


\section{Horocycles in simply connected domains}\label{horocycles}

In this section we define horocycles (also called ``horospheres'') in simply connected domains, using in this context an abstract approach introduced in \cite{BG}, and inspired on the general definition of horospheres in several complex variables introduced by M. Abate \cite{ABZ, Abate}.

Recall (see {\sl e.g.}, \cite[Prop. 1.2.2]{Abate}) that for every $\sigma\in \partial \D$,
\begin{equation}\label{Eq:yang}
\lim_{w\to \sigma}[k_\D(z,w)-k_\D(0,w)]=\frac12\log\frac{|\sigma-z|^2}{1-|z|^2},
\end{equation}
so that, given $R>0$,
\begin{equation}\label{Eq:yang2}
E(\sigma, R):=\{z\in \D: \frac{|\sigma-z|^2}{1-|z|^2}<R\}=\{z\in \D: \lim_{w\to \sigma}[k_\D(z,w)-k_\D(0,w)]<\frac{1}{2}\log R\}
\end{equation}
is a horocycle of $\D$ of center $\sigma$ and radius $R$.

Let $\Delta\subsetneq \C$ be a simply connected domain,  $z_0\in \Delta$ and let $f:\D\to \Delta$ be a Riemann map such that $f(0)=z_0$. Let $\underline{y}\in \partial_C \Delta$ be a prime end of $\Delta$. There exists exactly one $\sigma\in \partial \D$ which corresponds to a prime end $\underline{x}_\sigma\in \partial_C\D$ such that $\hat{f}(\underline{x}_\sigma)=\underline{y}$. Moreover, a sequence $\{w_n\}\subset \Delta$ converges to $\underline{y}$ in the Carath\'eodory topology of $\Delta$ if and only if $\{f^{-1}(w_n)\}$ converges to $\sigma$ (in the Euclidean topology). Therefore, if $\{z_n\}$ and $\{w_n\}$ are two sequences in $\Delta$ which converge to $\underline{y}$ in the Carath\'eodory topology of $\Delta$, taking into account that $f$ is an isometry for the hyperbolic distance, by \eqref{Eq:yang}, we have for every $z\in \Delta$:
\begin{equation*}
\begin{split}
\lim_{n\to \infty}[k_\Delta(z,w_n)-k_\Delta(z_0,w_n)]&=\lim_{n\to \infty}[k_\D(f^{-1}(z),f^{-1}(w_n))-k_\D(0,f^{-1}(w_n))]\\&=\lim_{n\to \infty}[k_\D(f^{-1}(z),f^{-1}(z_n))-k_\D(0,f^{-1}(z_n))]\\&=\lim_{n\to \infty}[k_\Delta(z,z_n)-k_\Delta(z_0, z_n)].
\end{split}
\end{equation*}
Moreover, by the same equation \eqref{Eq:yang}, $$\lim_{n\to \infty}[k_\D(f^{-1}(z),f^{-1}(w_n))-k_\D(0,f^{-1}(w_n))]\in (-\infty,+\infty),$$ hence  $\lim_{n\to \infty}[k_\Delta(z,w_n)-k_\Delta(z_0,w_n)]\in (-\infty,+\infty)$.

By the previous remark,  the following definition is well posed (that is, it is independent of the sequence $\{w_n\}$ chosen):

\begin{definition}
Let $\Delta\subsetneq \C$ be a simply connected domain and $z_0\in \Delta$. Let $\underline{y}\in \partial_C \Delta$ be a prime end of $\Delta$. Let $R>0$. The horocycle $E_{z_0}^\Delta(\underline{y}, R)$ of {\sl center $\underline{y}$}, {\sl base point $z_0$} and {\sl hyperbolic (radius) $R>0$} is
\[
E_{z_0}^\Delta(\underline{y}, R):=\{z\in \Delta: \lim_{n\to \infty}[k_\Delta(z,w_n)-k_\Delta(z_0,w_n)]<\frac{1}{2}\log R\},
\]
where $\{w_n\}\subset \Delta$ is any sequence which converges to $\underline{y}$ in the Carath\'eodory topology of~$\Delta$.
\end{definition}
Note that, by \eqref{Eq:yang}, $E(\sigma, R)=E^\D_0(\underline{x}_\sigma, R)$ for every $\sigma\in \partial \D$ and $R>0$.

The base point $z_0$ in the definition of horocycles is essentially irrelevant. Indeed, as proven by a direct computation, for every $R>0$, $z_0, z_1\in \Delta$ and $\underline{y}\in \partial_C \Delta$
\[
E_{z_0}^\Delta(\underline{y}, R)=E_{z_1}^\Delta(\underline{y}, AR),
\]
where $A\in (0,+\infty)$ and, in fact,
\[
-k_{\Delta}(z_0,z_1) \leq \frac{1}{2}\log A:=\lim_{n\to \infty}[k_\Delta(z_0,w_n)-k_\Delta(z_1, w_n)] \leq k_{\Delta}(z_0,z_1).
\]
If $\Delta, \widetilde\Delta\subsetneq \C$ are simply connected domain and $f:\Delta\to \widetilde\Delta$ is a biholomorphism, then we denote by $\hat{f}$ the homeomorphism induced by $f$ in the Carath\'eodory topology. Since biholomorphisms are isometries for the hyperbolic distance, we  immediately get:

\begin{proposition}\label{Prop:invariance}
Let $\Delta, \widetilde\Delta\subsetneq \C$ be two simply connected domains. Let $f:\Delta\to \widetilde\Delta$ be a biholomorphism, $\underline{y}\in \partial_C\Delta$ and $z_0\in \Delta$. Then, for every $R>0$
\[
f(E_{z_0}^\Delta(\underline{y}, R))=E_{f(z_0)}^{\widetilde \Delta}(\hat{f}(\underline{y}), R).
\]
\end{proposition}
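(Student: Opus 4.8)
The plan is to reduce everything to the two features of biholomorphisms between simply connected domains already recalled above: they are isometries for the hyperbolic distance, and they induce homeomorphisms $\hat f$ for the Carath\'eodory prime ends topology. Since the defining inequality for a horocycle is phrased purely in terms of these objects — together with the already-established fact that $E_{z_0}^\Delta(\underline y, R)$ does not depend on the choice of the approximating sequence — the identity should follow by a termwise substitution and a passage to the limit.

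Concretely, fix $z\in\Delta$ and choose any sequence $\{w_n\}\subset\Delta$ with $w_n\to\underline y$ in the Carath\'eodory topology of $\Delta$ (such a sequence exists by the description of the prime ends topology). Since $\hat f$ is a homeomorphism and $\hat f(\underline y)\in\partial_C\widetilde\Delta$, the sequence $\{f(w_n)\}\subset\widetilde\Delta$ converges to $\hat f(\underline y)$ in the Carath\'eodory topology of $\widetilde\Delta$, so it is an admissible sequence for computing $E_{f(z_0)}^{\widetilde\Delta}(\hat f(\underline y),R)$. Using $k_{\widetilde\Delta}(f(a),f(b))=k_\Delta(a,b)$ with $a\in\{z,z_0\}$ and $b=w_n$ gives
\[
k_\Delta(z,w_n)-k_\Delta(z_0,w_n)=k_{\widetilde\Delta}(f(z),f(w_n))-k_{\widetilde\Delta}(f(z_0),f(w_n))
\]
for every $n$. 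Passing to the limit, the left-hand side is $<\frac12\log R$ precisely when $z\in E_{z_0}^\Delta(\underline y,R)$, and the right-hand side is $<\frac12\log R$ precisely when $f(z)\in E_{f(z_0)}^{\widetilde\Delta}(\hat f(\underline y),R)$. Hence $f(E_{z_0}^\Delta(\underline y,R))\subseteq E_{f(z_0)}^{\widetilde\Delta}(\hat f(\underline y),R)$; applying the same argument to $f^{-1}$, and noting $\widehat{f^{-1}}=(\hat f)^{-1}$, yields the reverse inclusion and hence equality.

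The argument has essentially no hard step. The only point requiring a little care is the claim that $f(w_n)\to\hat f(\underline y)$ in the Carath\'eodory topology of $\widetilde\Delta$, but this is exactly the statement that $\hat f$ is a homeomorphism for the prime ends topology — the standard functoriality of the prime ends compactification under biholomorphisms (see \cite{CL}), which has already been invoked in the discussion preceding the statement. Everything else is the limit of an identity that holds termwise.
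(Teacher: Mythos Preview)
Your proof is correct and is exactly the argument the paper has in mind: the paper gives no detailed proof, stating only that the proposition follows immediately from the fact that biholomorphisms are isometries for the hyperbolic distance, and your write-up simply unpacks this together with the functoriality of $\hat f$ already discussed before the statement.
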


Now we need a ``localization lemma'', comparing hyperbolic distance in horospheres with hyperbolic distance in the domain. We start with a simple lemma whose proof is a direct computation based on \eqref{Eq:dist-hyp-H}:

\begin{lemma}\label{Lem:hyper-semipiano}
Let $\beta\in (-\frac{\pi}{2},\frac{\pi}{2})$.
\begin{enumerate}
\item Let  $0<\rho_0<\rho_1$ and let $\Gamma:=\{\rho e^{i\beta}: \rho_0\leq \rho\leq \rho_1\}$. Then, $\displaystyle{\ell_{\Ha}(\Gamma)=\frac{1}{2\cos \beta}\log\frac{\rho_1}{\rho_0}}$.
\item Let $\rho_0, \rho_1>0$. Then, $\displaystyle{k_{\Ha}(\rho_0,\rho_1e^{i\beta})-k_{\Ha}(\rho_0,\rho_1)\geq \frac{1}{2}\log\frac{1}{\cos \beta}.}$
\item Let $\rho_0>0$ and $\alpha\in (-\frac{\pi}{2},\frac{\pi}{2})$. Then, $(0,+\infty)\ni \rho\mapsto k_{\Ha}(\rho e^{i\alpha},\rho_0e^{i\beta})$ has a minimum at $\rho=\rho_0$, it is  increasing for $\rho>\rho_0$ and  decreasing for $\rho<\rho_0$.
\item Let $\theta_0, \theta_1\in (-\frac{\pi}{2},\frac{\pi}{2})$ and $\rho>0$. Then $k_{\Ha}(\rho e^{i\theta_0},\rho e^{i\theta_1})=k_{\Ha}(e^{i\theta_0},e^{i\theta_1})$. Moreover, $k_\Ha(1,e^{i\theta})=k_\Ha(1,e^{-i\theta})$ for all $\theta\in [0,\pi/2)$ and $[0, \pi/2)\ni \theta\mapsto k_\Ha(1,e^{i\theta})$ is strictly increasing.
\item Let $\beta_0,\beta_1\in (-\frac{\pi}{2},\frac{\pi}{2})$ and $0<\rho_0<\rho_1$. Then $\displaystyle{
k_{\Ha}(\rho_0e^{i\beta_0},\rho_1e^{i\beta_1})\geq k_{\Ha}(\rho_0,\rho_1)}$.
\end{enumerate}
\end{lemma}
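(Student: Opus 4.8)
The statement to prove is Lemma~\ref{Lem:hyper-semipiano}, which collects five elementary facts about the hyperbolic distance in the right half-plane $\Ha$. The unifying principle is that everything reduces to the explicit formula \eqref{Eq:dist-hyp-H}, namely $k_\Ha(z,w)=\tfrac12\log\frac{1+t}{1-t}$ with $t=\left|\frac{z-w}{z+\overline w}\right|$; since $s\mapsto\tfrac12\log\frac{1+s}{1-s}$ is strictly increasing on $[0,1)$, every inequality or monotonicity statement about $k_\Ha$ is equivalent to the same statement about the auxiliary quantity $t$, which is a rational function of the coordinates and hence amenable to direct computation. So the plan is: first record this monotonicity reduction once and for all, then treat the five items in turn.

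\medskip\noindent\textbf{Items (1) and (2).} For (1), parametrize $\Gamma$ by $\rho\mapsto\rho e^{i\beta}$, $\rho\in[\rho_0,\rho_1]$; the infinitesimal metric of $\Ha$ is $\kappa_\Ha(z;v)=\frac{|v|}{2\Re z}$ (this follows from \eqref{Eq:dist-hyp-H} by letting $w\to z$, or from the standard normalization), so $\ell_\Ha(\Gamma)=\int_{\rho_0}^{\rho_1}\frac{1}{2\rho\cos\beta}\,d\rho=\frac{1}{2\cos\beta}\log\frac{\rho_1}{\rho_0}$. For (2), I would compute both sides from \eqref{Eq:dist-hyp-H}. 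One has $k_\Ha(\rho_0,\rho_1)=\tfrac12\log\frac{\rho_1}{\rho_0}$ (taking $\rho_1>\rho_0$ without loss of generality, the general case being symmetric), while $k_\Ha(\rho_0,\rho_1 e^{i\beta})$ corresponds to $t^2=\frac{|\rho_0-\rho_1 e^{i\beta}|^2}{|\rho_0+\rho_1 e^{-i\beta}|^2}$; expanding, $|\rho_0\mp\rho_1 e^{\pm i\beta}|^2=\rho_0^2+\rho_1^2\mp 2\rho_0\rho_1\cos\beta$, so the inequality to be proved becomes, after exponentiating, $\frac{1+t}{1-t}\ge\frac{\rho_1}{\rho_0\cos\beta}$, i.e.\ a polynomial inequality in $\rho_0,\rho_1,\cos\beta$ that simplifies (it reduces to $(\rho_1-\rho_0\cos\beta)^2\ge 0$ after clearing denominators, or something equivalent). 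Alternatively, and more cleanly, (2) follows by combining (1) with the triangle-type observation that $k_\Ha(\rho_0,\rho_1 e^{i\beta})\ge\ell_\Ha(\Gamma)$ minus a correction; I would present whichever bookkeeping is shortest, most likely the direct computation.

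\medskip\noindent\textbf{Items (3), (4), (5).} Item (4) is the normalization invariance: the map $z\mapsto\lambda z$ for $\lambda>0$ is a biholomorphism of $\Ha$, hence an isometry, which gives $k_\Ha(\rho e^{i\theta_0},\rho e^{i\theta_1})=k_\Ha(e^{i\theta_0},e^{i\theta_1})$; the symmetry $k_\Ha(1,e^{i\theta})=k_\Ha(1,e^{-i\theta})$ comes from the isometry $z\mapsto\overline z$, and strict monotonicity in $\theta\in[0,\pi/2)$ follows because the corresponding $t^2=\frac{|1-e^{i\theta}|^2}{|1+e^{-i\theta}|^2}=\frac{2-2\cos\theta}{2+2\cos\theta}=\frac{1-\cos\theta}{1+\cos\theta}$ is strictly increasing in $\theta$ on that range. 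For (3), write $k_\Ha(\rho e^{i\alpha},\rho_0 e^{i\beta})$ via \eqref{Eq:dist-hyp-H}: the relevant quantity is $t^2=\frac{\rho^2+\rho_0^2-2\rho\rho_0\cos(\alpha-\beta)}{\rho^2+\rho_0^2+2\rho\rho_0\cos(\alpha+\beta)}$; differentiating in $\rho$ (or, cleaner, substituting $\rho=\rho_0 e^s$ and differentiating in $s$, exploiting that under $\rho\mapsto\rho_0^2/\rho$ the expression has a symmetry) shows the unique critical point is at $\rho=\rho_0$ and it is a minimum, with the claimed monotonicity on each side. Finally (5): using (3) and (4), $k_\Ha(\rho_0 e^{i\beta_0},\rho_1 e^{i\beta_1})\ge k_\Ha(\rho_0 e^{i\beta_0},\rho_1 e^{i\beta_0})$ is false in general for the wrong reason, so instead I would argue that the geodesic between $\rho_0 e^{i\beta_0}$ and $\rho_1 e^{i\beta_1}$ (a circular arc meeting $\partial\Ha$ orthogonally) must cross each circle $\{|z|=\rho\}$ for $\rho\in[\rho_0',\rho_1']$ covering some interval reaching from near $\rho_0$ to near $\rho_1$, combined with the fact from (1) that traversing $\{|z|=\rho\}$ costs at least the radial cost; cleanly, one reduces $k_\Ha(\rho_0 e^{i\beta_0},\rho_1 e^{i\beta_1})\ge k_\Ha(\rho_0,\rho_1)=\tfrac12\log\frac{\rho_1}{\rho_0}$ to the polynomial inequality obtained by plugging into \eqref{Eq:dist-hyp-H} and clearing denominators, which after expansion is a sum of squares.

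\medskip The main obstacle is purely organizational rather than conceptual: there is no hard idea, but one must choose for each item the right coordinate substitution so that the resulting rational inequality collapses to an obvious positivity (typically a perfect square), and one must be careful with the cases $\rho_0<\rho_1$ versus $\rho_0>\rho_1$ and with the signs of the angles. I expect (3) to require the most care, since it is a genuine monotonicity-plus-convexity statement in $\rho$ and the cleanest route is the substitution $\rho=\rho_0 e^s$ together with the reflection $s\mapsto-s$; once that symmetry is spotted, the minimum at $\rho=\rho_0$ and the monotonicity on each side are immediate, and (5) then follows from (3) and (1) by the crossing argument sketched above, or directly.
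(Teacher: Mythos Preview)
Your proposal is correct and takes exactly the approach the paper indicates: the paper gives no proof beyond the phrase ``a direct computation based on \eqref{Eq:dist-hyp-H}'', and your item-by-item reduction to rational inequalities in the auxiliary quantity $t=\left|\frac{z-w}{z+\overline w}\right|$ is precisely that computation carried out. The substitution $\rho=\rho_0 e^{s}$ in (3), giving $t^2=\dfrac{\cosh s-\cos(\alpha-\beta)}{\cosh s+\cos(\alpha+\beta)}$, is a clean way to expose the $s\mapsto -s$ symmetry and the monotonicity; for (5) your ``crossing'' sketch is unnecessary, since the same substitution reduces the claim to $\cosh s\,(1-\cos\beta_0\cos\beta_1)\ge\sin\beta_0\sin\beta_1$, which follows from $\cosh s\ge 1$ and $\cos(\beta_0-\beta_1)\le 1$.
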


Let $\infty\in\partial_C\Ha$ denote the prime end corresponding to $1 \in \partial \mathbb D$, via the Cayley transform $C:\D\to \Ha$, $C(z)=\frac{1+z}{1-z}$. For $R>0$ a direct computation shows
\[
E_1^\Ha(\infty, R)=\{w\in \C: \Re w>R\}.
\]
In particular, if $t_0>0$ and $\gamma:[1,+\infty)\to \Ha$ is the geodesic defined by $\gamma(t)=t$, it follows that $\gamma(t)\to \infty$ in the Carath\'eodory topology of $\Ha$ and $\gamma(t)$ is eventually contained in $E_1^\Ha(\infty, R)$ for every $R>0$.

Also, if $\gamma:[0,+\infty)$ is a geodesic in a simply connected domain $\Delta\subsetneq \C$ such that $\lim_{t\to +\infty}k_\Delta(\gamma(0), \gamma(t))=+\infty$ and $R>0$ we define
\[
S_\Delta(\gamma, R):=\{z\in \Delta: k_\Delta(z,\gamma)<R\},
\]
a {\sl hyperbolic sector around $\gamma$}.

\begin{lemma}\label{Lem:contril-hyp-horo}
Let $U\subsetneq \C$ be a simply connected domain, $z_0\in U$, $\underline{y}\in \partial_CU$ and  $\gamma:[0,+\infty)\to U$  a geodesic which converges to $\underline{y}$ in the Carath\'eodory topology of $U$. Let $\{z_n\}$ be a sequence with no accumulation points in $U$. If $\lim_{n\to \infty}k_U(z_n, \gamma([0,+\infty)))=0$ then $\{z_n\}$ is eventually contained in $E^U_{z_0}(\underline{y}, R)$ and $\lim_{n\to \infty}k_{E^U_{z_0}(\underline{y}, R)}(z_n, \gamma([0,+\infty)))=0$, for all $R>0$.
\end{lemma}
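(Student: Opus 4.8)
The plan is to reduce, via a Riemann map, to a model case and then use the localization principle for horocycles together with the distance formula in the right half-plane. First I would use Proposition~\ref{Prop:invariance}: pick the Riemann map $g:U\to\Ha$ with $\hat g(\underline y)=\infty$ (the prime end of $\Ha$ corresponding to $1\in\partial\D$ under the Cayley transform), and replace $z_0$ by $g(z_0)$. Since biholomorphisms are isometries for the hyperbolic distance and carry geodesics to geodesics and horocycles to horocycles, the statement for $U$ is equivalent to the statement for $\Ha$ with base point $w_0:=g(z_0)$, the sequence $w_n:=g(z_n)$, and $\tilde\gamma:=g\circ\gamma$ a geodesic of $\Ha$ landing at $\infty$. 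By Proposition~\ref{Prop:geodesic-in-simply}(5) and the classification of geodesics of $\Ha$, such a geodesic is (a reparametrization of) a Euclidean ray or a half-circle orthogonal to $\partial\Ha$ with one endpoint $\infty$; composing with an automorphism of $\Ha$ fixing $\infty$ (a dilation, which does not change the hyperbolic distance), I may assume $\tilde\gamma((0,+\infty))=(0,+\infty)$, i.e. $\tilde\gamma(t)=t$ up to reparametrization, and $w_0=1$ after a further dilation. Then $E_{1}^\Ha(\infty,R)=\{w:\Re w>R\}$, as recorded just before the lemma.

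Next, with these normalizations, the hypothesis reads $\dist_k(w_n):=\inf_{t>0}k_\Ha(w_n,t)\to 0$. The first claim, that $\{w_n\}$ is eventually in $\{\Re w>R\}$, I would get as follows: since $\{z_n\}$ has no accumulation point in $U$, $\{w_n\}$ has none in $\Ha$; I would argue that $k_\Ha(w_n,(0,+\infty))\to 0$ forces $|w_n|\to\infty$ \emph{and} $\arg w_n\to 0$. Indeed, writing $w_n=\rho_n e^{i\beta_n}$ with $\beta_n\in(-\pi/2,\pi/2)$, Lemma~\ref{Lem:hyper-semipiano}(3) gives $\inf_{\rho>0}k_\Ha(\rho_n e^{i\beta_n},\rho)=k_\Ha(\rho_n e^{i\beta_n},\rho_n)=k_\Ha(1,e^{i\beta_n})$ by scaling (Lemma~\ref{Lem:hyper-semipiano}(4)), which by strict monotonicity in $|\beta_n|$ tends to $0$ only if $\beta_n\to 0$; and if $\rho_n$ stayed bounded away from $0$ and $\infty$ along a subsequence, $w_n$ would subconverge in $\Ha$, contradicting the no-accumulation hypothesis, so $\rho_n\to 0$ or $\rho_n\to\infty$. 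If $\rho_n\to 0$ the point $w_n$ converges to $0\in\partial\Ha$, which is the prime end corresponding to $-1$, not $\infty$; I would rule this out because $k_\Ha(w_n,(0,+\infty))\to0$ together with $k_\Ha(w_0,w_n)\to+\infty$ (forced since $\{w_n\}$ leaves every compact subset of $\Ha$) is symmetric under $w\mapsto 1/w$ in $\Ha$... more cleanly: $k_\Ha(1,w_n)\le k_\Ha(1,t_n)+k_\Ha(t_n,w_n)$ for the nearest real $t_n$, and if $\rho_n\to0$ then the nearest point on $(0,+\infty)$ also tends to $0$, so $w_n\to0$ in the Carath\'eodory topology, i.e. $\gamma(t)$ would not converge to $\underline y=\infty$; but $\gamma$ does converge to $\infty$, and $\operatorname{dist}_k(w_n,\gamma)\to0$ pins $w_n$ to the same prime end, forcing $\rho_n\to\infty$. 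Hence $\Re w_n=\rho_n\cos\beta_n\to\infty$, so eventually $\Re w_n>R$.

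For the second claim I would use the explicit comparison between $k_\Ha$ and $k_{H_R}$, where $H_R:=\{\Re w>R\}$, via the translation $w\mapsto w-R$ sending $H_R$ to $\Ha$ isometrically. Writing $w_n'=w_n-R$ and noting the nearest point of $(0,+\infty)$ to $w_n$ in the $\Ha$-metric and the nearest point of $(R,+\infty)$ to $w_n$ in the $H_R$-metric are governed by the same one-variable minimization (Lemma~\ref{Lem:hyper-semipiano}(3), applied to $w_n$ and to $w_n'$ respectively), I would show
\[
k_{H_R}(w_n,(R,+\infty))=k_\Ha(w_n',(0,+\infty))=k_\Ha(1,e^{i\beta_n'}),\qquad \beta_n':=\arg(w_n-R),
\]
and that $\beta_n'\to0$ because $\Re w_n\to\infty$ forces $\arg(w_n-R)\to0$ (the imaginary part $\Im w_n$ may be controlled since $|\beta_n|\to0$ and $\rho_n\to\infty$ already give $\Im w_n=\rho_n\sin\beta_n=o(\rho_n)$, while $\Re w_n-R\sim\rho_n$). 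Since $(0,+\infty)=\tilde\gamma((0,+\infty))$ is the \emph{same} curve as the geodesic of $H_R$ joining $R$ to $\infty$, and $k_{H_R}\ge k_U$ restricted appropriately only in the wrong direction, I actually want $k_{E_{z_0}^U(\underline y,R)}\ge k_U$ trivially and the upper bound $k_{H_R}(w_n,\tilde\gamma)\to0$ from the display above; transporting back by $g^{-1}$ and the horocycle identity $E_{z_0}^U(\underline y,R)=g^{-1}(H_R)$ finishes it.

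The main obstacle I expect is the careful bookkeeping in the reduction: verifying that after composing with the automorphism of $\Ha$ fixing $\infty$ one may simultaneously arrange $w_0=1$ \emph{and} $\tilde\gamma=(0,+\infty)$ (a dilation does both since both $w_0$ and the ray scale together only if $w_0$ lies on the ray — which need not hold, so one must instead allow $w_0$ to be an arbitrary point of $\Ha$ and carry the bounded additive constant from the base-point change of horocycles, i.e. the inequality $|\tfrac12\log A|\le k_\Ha(w_0,1)$ quoted in the text, absorbing it into ``radius $R$''), and checking that the no-accumulation hypothesis is exactly what excludes the spurious prime end $0$. The one-variable estimates themselves are routine given Lemma~\ref{Lem:hyper-semipiano}.
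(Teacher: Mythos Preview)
Your plan is correct and, for the second claim, is actually more direct than the paper's argument. Both you and the paper perform the same reduction to $U=\Ha$, $\underline y=\infty$, $\gamma(t)=t$, and obtain the ``eventually in the horocycle'' conclusion by showing $\theta_n\to0$ and $\rho_n\to\infty$ via Lemma~\ref{Lem:hyper-semipiano}(3),(4). The divergence comes in proving $k_{E}(z_n,\gamma)\to0$: the paper does \emph{not} exploit the affine isometry $w\mapsto w-R$ from $H_R=E_1^\Ha(\infty,R)$ onto $\Ha$, but instead sandwiches the sequence in a hyperbolic sector $S=S_\Ha(\gamma|_{[t_0,\infty)},N)\subset H_R$ and quotes \cite[Lemmas~4.4 and 4.6]{BCDG} to get a Lipschitz comparison $k_S(\rho_n,z_n)\le C\,k_\Ha(\rho_n,z_n)$, whence $k_{H_R}\le k_S\to0$. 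Your route avoids these external localization lemmas entirely, at the price of one explicit computation (that $\beta_n':=\arg(w_n-R)\to0$ because $\tan\beta_n'=\sin\theta_n/(\cos\theta_n-R/\rho_n)\to0$); this is a genuine simplification.

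One bookkeeping correction that also dissolves the ``obstacle'' you flag at the end: after the normalization, the image $\tilde\gamma([0,+\infty))$ is a half-line $[a,+\infty)$ with $a=\tilde\gamma(0)\in\Ha$, not the whole ray $(0,+\infty)$, since $\gamma(0)\in U$. With this in hand, the spurious case $\rho_n\to0$ is excluded immediately: if $\rho_n\le a$ then the nearest point of $[a,+\infty)$ is $a$ (Lemma~\ref{Lem:hyper-semipiano}(3)), and $k_\Ha(\rho_n e^{i\theta_n},a)\ge k_\Ha(\rho_n,a)=\tfrac12\log(a/\rho_n)\to\infty$, contradicting the hypothesis. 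So you need no prime-end argument to rule out convergence to $0$. Likewise, you need not force $w_0=1$: as you note (and as the paper does), the base-point change of horocycles alters $R$ by a fixed multiplicative factor bounded by $e^{\pm 2k_\Ha(w_0,1)}$, which is harmless since the conclusion is for all $R>0$. Finally, for the second claim note that the minimizing point on $(R,+\infty)$ in the $H_R$-metric is $|w_n-R|+R\to\infty$, hence eventually exceeds $a$; thus the distance in $H_R$ from $w_n$ to $\tilde\gamma([0,+\infty))\cap H_R$ equals your $k_\Ha(1,e^{i\beta_n'})$ for large $n$, and no further case analysis is needed.
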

\begin{proof}
Using Proposition \ref{Prop:invariance} and the fact that horospheres do not depend essentially on the base point, we can assume $U=\Ha$, $z_0=1$, $\underline{y}=\infty\in\partial_C\Ha$ and $\gamma(t)=t$, $t\geq 1$.

Write $z_n=\rho_ne^{i\theta_n}$, with $\rho_n>0$ and $\theta_n\in (-\pi/2,\pi/2)$. Hence, $k_\Ha(\rho_n e^{i\theta_n}, [1,+\infty))=k_\Ha(\rho_n e^{i\theta_n}, \rho_n)$ by  Lemma \ref{Lem:hyper-semipiano}(3). Moreover, by assumption, $\lim_{n\to \infty}k_\Ha(\rho_n e^{i\theta_n}, \rho_n)=0$. Hence,  by Lemma \ref{Lem:hyper-semipiano}(4), $\{z_n\}$ is eventually contained in $V(r, M):=\{\rho e^{i\theta}: \rho>M, |\theta|<r\}$ for all $r>0$, $M>0$.

Note that, given $R>0$, then for $M$ sufficiently large, $V(r,M)\subset E_1^\Ha(\infty, R)$---and, in particular, $\{z_n\}\subset E_1^\Ha(\infty, R)$ eventually.

Let $R>0$. By \cite[Lemma 4.4]{BCDG} and the previous consideration, it follows that for all $N>0$, there exists $t_0\geq 1$ such that $S_\Ha(\gamma|_{[t_0, \infty)}, N)\subset E_1^\Ha(\infty, R)$ and  $\{z_n\}\subset S_\Ha(\gamma|_{[t_0, \infty)}, N)$ eventually.
Hence, by \cite[Lemma 4.6]{BCDG}, once fixed $t_0$ and $N$, and setting $S:=S_\Ha(\gamma|_{[t_0, \infty)}, N)$ for short, there exists a constant $C>0$ such that for all $n$ such that $z_n\in S$,
\[
k_S(\rho_n, z_n)\leq C k_\Ha(\rho_n, z_n).
\]
Therefore, $\lim_{n\to \infty}k_S(\rho_n, z_n)=0$. Now, given $R>0$,  we can choose $N>0$ and $t_0\geq 1$ such that $S:=S_\Ha(\gamma|_{[t_0, \infty)}, N)\subset E_1^\Ha(\infty, R)$. Hence, for $n$ sufficiently large,
\[
k_{E_1^\Ha(\infty, R)}(\rho_n, z_n)\leq k_S(\rho_n, z_n)\to 0.
\]
This proves Lemma~\ref{Lem:contril-hyp-horo}.
\end{proof}

\section{Proof of Theorem \ref{Thm:main}}\label{theproof}

For $\beta\in (0,\pi)$, we denote
\[
V(\beta):=\{\rho e^{i\theta}: \rho>0, |\theta|<\beta\}.
\]
By Lemma \ref{Lem:hyper-semipiano}, it follows immediately that $V(\beta)$ is a hyperbolic sector around the geodesic $(0,+\infty)$ of $\Ha$.

The main result we need is the following:

\begin{proposition}\label{orthog-in-semipl}
Let $\Delta\subsetneq \C$ be a simply connected domain and let $f:\D \to \Delta$ be a Riemann map. Suppose that $\Ha+a\subset \Delta\subset \Ha$ for some $a>0$. Then there exists $\xi\in \partial \D$ such that $f^{-1}(t)$ converges orthogonally to $\xi$ as $t\to +\infty$.
\end{proposition}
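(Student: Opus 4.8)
The plan is to exploit the two-sided containment $\Ha+a\subset \Delta\subset \Ha$ to compare the hyperbolic geometry of $\Delta$ with that of $\Ha$, and to show that the curve $\Gamma(t)=t$, $t$ large, is a quasi-geodesic of $\Delta$ in the sense of Gromov. First I would observe that since $\Ha+a\subset\Delta\subset\Ha$, by monotonicity of the hyperbolic distance under inclusion we have $k_\Ha(z,w)\le k_\Delta(z,w)\le k_{\Ha+a}(z,w)$ for $z,w\in\Ha+a$, and $k_{\Ha+a}(z,w)=k_\Ha(z-a,w-a)$. Using the explicit formula \eqref{Eq:dist-hyp-H} one checks that for real points $t,s$ with, say, $t,s\ge 2a$, the quantities $k_\Ha(t,s)$ and $k_\Ha(t-a,s-a)$ differ by a bounded amount; more precisely $\ell_\Delta(\Gamma;[s,t])\le \ell_{\Ha+a}(\Gamma;[s,t])=k_{\Ha+a}(s,t)=k_\Ha(s-a,t-a)$, and this is at most $k_\Ha(s,t)+\text{const}\le k_\Delta(s,t)+\text{const}$. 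Hence $[2a,T]\ni t\mapsto t$ is an $(1,B)$-quasi-geodesic of $\Delta$ for a uniform $B$, for every $T$. Since $k_\Delta(2a,t)\ge k_\Ha(2a,t)\to+\infty$, Corollary~\ref{Cor:shadow} applies: there is a prime end $\underline x\in\partial_C\Delta$ with $\Gamma(t)\to\underline x$, and the geodesic $\eta$ of $\Delta$ from $\Gamma(2a)$ to $\underline x$ stays within a bounded hyperbolic distance $\epsilon$ of $\Gamma$ and vice versa. Transporting by the Riemann map $f$, this gives that $f^{-1}(t)$ converges to some $\xi\in\partial\D$ and $k_\D(f^{-1}(t),f^{-1}(\eta))$ stays bounded; since a bounded hyperbolic neighbourhood of a geodesic of $\D$ landing at $\xi$ is contained in a Stolz angle at $\xi$, the convergence $f^{-1}(t)\to\xi$ is at least non-tangential.

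The real work is upgrading non-tangential convergence to \emph{orthogonal} convergence, i.e.\ showing $\arg(1-\bar\xi f^{-1}(t))\to 0$. After applying an automorphism of $\D$ (which changes neither orthogonality at $\xi$ nor the hypotheses) I may assume $\xi=1$, and after composing $f$ with the Cayley transform I may work in $\Ha$ with $f:\Ha\to\Delta$, $f^{-1}(t)\to\infty$; writing $f^{-1}(t)=\rho(t)e^{i\theta(t)}$, orthogonality of $f^{-1}(t)\to 1$ translates (via the Cayley transform) into $\theta(t)\to 0$. The strategy is to show that the geodesic $\eta$ of $\Delta$ shadowing $\Gamma$ actually gets \emph{hyperbolically closer and closer} to $\Gamma$, i.e.\ $k_\Delta(\Gamma(t),\eta)\to 0$ as $t\to+\infty$, not merely $\le\epsilon$. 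Indeed, by Lemma~\ref{Lem:hyper-semipiano}(2), for real $\rho_0$ and a point $\rho_1 e^{i\beta}$ we have $k_\Ha(\rho_0,\rho_1 e^{i\beta})-k_\Ha(\rho_0,\rho_1)\ge\frac12\log\frac1{\cos\beta}$, so a uniform lower bound on $|\theta(t)|$ along a subsequence would force a definite excess of hyperbolic length of $\Gamma$ over $k_\Delta$ of its endpoints; I would aim to contradict this by producing a genuinely shorter competitor path in $\Delta$ (using room provided by $\Ha+a\subset\Delta$) whenever $\Gamma$ drifts away in angle. Concretely: if $|\theta(t_n)|\ge\delta_0>0$ for a sequence $t_n\to\infty$, then translating by $a$ one stays inside $\Delta$, and the ``straightening'' of the path $\Gamma$ near $t_n$ inside $\Ha+a$ saves a positive amount of hyperbolic length infinitely often, making $\ell_\Delta(\Gamma;[2a,t])$ exceed $k_\Delta(2a,t)+C$ for every $C$ — contradicting the quasi-geodesic property proved above (or, more cleanly, contradicting $k_\Delta(\Gamma(t),\eta)\le\epsilon$ combined with the defect estimate).

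In detail, the key quantitative step I expect to carry out is: the ``hyperbolic defect'' $D(t):=\ell_\Delta(\Gamma;[2a,t])-k_\Delta(\Gamma(2a),\Gamma(t))$ is nondecreasing in $t$ and bounded (by $B$), hence converges; and the increment $D(t)-D(s)$ controls, from below, a quantity like $\int_s^t \frac{\theta(u)^2}{\text{(scale)}}\,du$ coming from comparing $\ell_\Ha$ along $\Gamma$ with $k_\Ha$ of its endpoints via Lemma~\ref{Lem:hyper-semipiano}. Since $D$ has a finite limit, this forces $\theta(t)\to 0$ (at least in an averaged sense, then pointwise using that $f^{-1}$ is a normal family / the geodesic shadowing keeps oscillation under control). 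Translating back through the Cayley transform and the automorphism, $\theta(t)\to 0$ is exactly $\arg(1-\bar\xi f^{-1}(t))\to 0$, i.e.\ $f^{-1}(t)$ converges orthogonally to $\xi$. The main obstacle, and where the careful estimates of Section~\ref{theproof} really bite, is making the comparison between $k_\Delta$ and $k_\Ha$ (resp.\ $k_{\Ha+a}$) precise enough near infinity that the angular defect is genuinely integrated and controlled by the bounded total defect $D(\infty)-D(2a)\le B$; the two-sided sandwich $\Ha+a\subset\Delta\subset\Ha$ is exactly what makes both the upper bound (via $\Ha+a$) and the lower bound (via $\Ha$) available, but extracting from it the sharp ``the shadowing geodesic collapses onto $\Gamma$'' statement is the delicate part.
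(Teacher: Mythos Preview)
Your first paragraph is correct and matches the paper's own remark in the introduction: the sandwich $\Ha+a\subset\Delta\subset\Ha$ makes $\Gamma(t)=t$ a $(1,B)$-quasi-geodesic of $\Delta$, Corollary~\ref{Cor:shadow} applies, and non-tangential convergence follows. The difficulty is entirely in the upgrade to orthogonal convergence, and there your argument has a genuine gap.

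The problem is that you are working in two different copies of $\Ha$ and conflating them. One copy is the ambient space containing $\Delta$; there $\Gamma$ is the positive real axis and has angle identically $0$. The other copy is the target of $C\circ f^{-1}$ (Cayley composed with the Riemann map), and $\theta(t)=\arg\big(C(f^{-1}(t))\big)$ lives there. When you invoke Lemma~\ref{Lem:hyper-semipiano}(2) to say ``a uniform lower bound on $|\theta(t)|$ forces a definite excess of hyperbolic length of $\Gamma$ over $k_\Delta$ of its endpoints'', you are applying an estimate about angles in the \emph{ambient} $\Ha$ to the quantity $\theta(t)$, which measures an angle in the \emph{target} $\Ha$. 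These are unrelated: $\Gamma$ never leaves the real axis in the ambient picture, so Lemma~\ref{Lem:hyper-semipiano}(2) gives you nothing there, and the lemma says nothing about the curve $f^{-1}\circ\Gamma$ in the target, whose shape you do not control. For the same reason the ``straightening of $\Gamma$ inside $\Ha+a$'' sentence is empty: $\Gamma$ is already a geodesic of both $\Ha$ and $\Ha+a$, so no competitor is shorter. Your defect $D(t)$ does converge, but the step ``$D(t)-D(s)$ controls $\int_s^t\theta(u)^2/(\text{scale})\,du$'' is exactly where the two pictures get mixed, and it is not justified.

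The paper's proof avoids this entirely by never looking at $\theta(t)$ directly. It works only in the ambient $\Ha\supset\Delta\supset U:=\Ha+1$ and proves a purely geometric fact about $\Delta$: for every $\beta>0$ there is $\alpha_\beta$ such that the $\Delta$-geodesic joining any $x_0,x_1\in[\alpha_\beta,\infty)$ stays inside the cone $V(\beta)$. The mechanism is a quantitative squeeze: if the geodesic exited $V(\beta)$ at a point $y e^{i\beta}$, then using $\Delta\subset\Ha$ and Lemma~\ref{Lem:hyper-semipiano}(2) one gets $k_\Delta(x_0,x_1)\ge k_\Ha(x_0,x_1)+\log\tfrac{1}{\cos\beta}$, while using $U\subset\Delta$ one gets $k_\Delta(x_0,x_1)\le k_U(x_0,x_1)=k_\Ha(x_0-1,x_1-1)$; for $x_0$ large these are incompatible. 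A separate estimate (Step~1) then shows that inside $V(\beta)$ the $k_U$-diameter of circular arcs $\{\rho e^{i\theta}:|\theta|\le\beta\}$ is at most $K(\beta)\to 0$, so every real $x\in[x_0,x_1]$ is within $K(\beta)$ of the $\Delta$-geodesic. Only after establishing $k_\Delta(t,\eta)\to 0$ does the paper pass to $\D$ via $f^{-1}$, using the elementary fact that any geodesic of $\D$ landing at $\xi$ is eventually inside every hyperbolic tube around the radius $[0,\xi)$. That last step is what converts ``close to \emph{some} geodesic ending at $\xi$'' into ``close to the \emph{radius}'', and it is missing from your outline as well.
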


\begin{proof}
We can assume without loss of generality that $a=1$ and let $U:=\Ha+1$. Note that, for every $z,w\in U$,
\begin{equation}\label{Eq:basic-move-HU}
k_U(z,w)=k_\Ha(z-1,w-1).
\end{equation}
We divide the proof in several steps:

\smallskip

{\sl Step 1.} Let $\beta\in (0,\pi/4)$. Then there exists a constant $K(\beta)>0$ such that for every $\theta_0, \theta_1\in [-\beta, \beta]$ and for every $\rho\geq 2$, $k_U(\rho e^{i\theta_0}, \rho e^{i\theta_1})<K(\beta)$. Moreover, $\lim_{\beta\to 0}K(\beta)=0$.

\smallskip

Fix $\beta\in (0,\frac{\pi}{4})$ and $\rho\geq 2$.
Let $\tilde{\beta}=\tilde{\beta}(\rho)\in (0, \frac{\pi}{2})$ be such that $e^{i\tilde{\beta}}=\frac{\rho e^{i\beta}-1}{|\rho e^{i\beta}-1|}$.
Hence,
\[
\sin \tilde \beta=\frac{\sin \beta}{|e^{i\beta}-1/\rho|}\leq\frac{2\sin \beta}{|2e^{i\beta}-1|},
\]
which shows that $\lim_{\beta\to 0}\sup_{\rho\geq 2}\tilde \beta(\rho)=0$.

Let $A_1:=\{|e^{i\beta}\rho-1|e^{i\theta}+1: |\theta|\leq \tilde\beta\}$. Note that $A_1$ is the  arc of the circle  with center $1$, radius $|e^{i\beta}\rho-1|$  and end points $p_0:=e^{i\beta}\rho=|e^{i\beta}\rho-1|e^{i\tilde{\beta}}+1$ and $p_1:=e^{-i\beta}\rho=|e^{i\beta}\rho-1|e^{-i\tilde{\beta}}+1$.

Let $A_2:=\{(\rho-1) e^{i\theta}+1 : |\theta|\leq \tilde	\beta\}$. Note that $A_2$ is the arc of the circle with center $1$ and radius $\rho-1$ with end points $q_0:=(\rho-1) e^{i\tilde\beta}+1$ and $q_1:=(\rho-1) e^{-i\tilde\beta}+1$.

Note that by construction, $A_1, A_2$ are arcs of circles that intersect $\partial U$ orthogonally, hence they are geodesics for the hyperbolic distance $k_{U}$.

Let $B_1:=\{re^{i\tilde\beta}+1: \rho-1\leq r\leq |e^{i\beta}\rho-1| \}$ and  let $B_2:=\{re^{-i\tilde\beta}+1: \rho-1\leq r\leq |e^{i\beta}\rho-1| \}$.

By construction,  $A_1\cup B_1\cup A_2\cup B_2$ is a Jordan curve which bounds a simply connected domain $Q\subset \C$. Moreover, by simple geometric considerations, the curve $\{\rho e^{i\theta}: |\theta|\leq \beta\}$ is contained in $Q$. Hence,
\[
k_{U}(\rho e^{i\theta_0},\rho e^{i\theta_1})\leq \hbox{diam}_{U}(Q),
\]
where $\hbox{diam}_{U}(Q):=\sup_{z,w\in Q}k_{U}(z,w)$ is the hyperbolic diameter of $Q$. Clearly,
\[
\hbox{diam}_{U}(Q)\leq \ell_{U}(A_1)+\ell_{U}(A_2)+\ell_{U}(B_1)+\ell_{U}(B_2).
\]

Now, since $A_1, A_2$ are geodesics for $U$, it follows that $\ell_{U}(A_1)=k_{U}(p_0,p_1)$ and $\ell_{U}(A_2)=k_{U}(q_0,q_1)$.
Hence, by \eqref{Eq:basic-move-HU}
\[
\ell_{U}(A_1)=k_{U}(p_0,p_1)=k_{\Ha}(p_0-1,p_1-1)=k_{\Ha}(|e^{i\beta}\rho-1|e^{i\tilde{\beta}}, |e^{i\beta}\rho-1|e^{-i\tilde{\beta}}),
\]
and, by   Lemma \ref{Lem:hyper-semipiano}(4), $k_{\Ha}(|e^{i\beta}\rho-1|e^{i\tilde{\beta}}, |e^{i\beta}\rho-1|e^{-i\tilde{\beta}})$ depends on $\tilde\beta$ and   goes to 0 as $\beta$ goes to $0$. Similarly, $\ell_{U}(A_2)$  goes to 0 as $\beta$ goes to $0$.

On the other hand, by \eqref{Eq:basic-move-HU} and Lemma \ref{Lem:hyper-semipiano}(1),
\begin{equation*}
\begin{split}
\ell_{U}(B_1)=\ell_{\Ha}(B_1-1)=\ell_{\Ha}(\{re^{i\tilde\beta}: \rho-1\leq r\leq |e^{i\beta}\rho-1| \})=\frac{1}{2\cos \tilde\beta}\log\frac{|e^{i\beta}\rho-1|}{\rho-1}.
\end{split}
\end{equation*}
Since $\sin(\tilde \beta)\leq \frac{2\sin(\pi/4)}{|2e^{i\pi/4}-1|} = \sqrt{\frac{2}{5-2\sqrt 2}}$, we have that $\cos(\tilde \beta)\geq \sqrt{\frac{3-2\sqrt 2}{5-2\sqrt 2}}$ and
\[
\ell_{U}(B_1)\leq \sqrt{\frac{5-2\sqrt 2}{3-2\sqrt 2}}\log\frac{|e^{i\beta}\rho-1|}{\rho-1}\leq \sqrt{\frac{5-2\sqrt 2}{3-2\sqrt 2}}\log|2e^{i\beta}-1|,
\]
for every $\rho\geq 2$,
which shows that $\ell_{U}(B_1)$ goes to 0 as $\beta$ goes to 0.  A similar argument shows that also $\ell_{U}(B_2)$ goes to 0 as $\beta$ goes to 0 and Step 1 follows.

\smallskip

{\sl Step 2.} Let $\beta\in (0,\pi/2)$. Let $\alpha_\beta:=(1-\cos^2\beta)^{-1}$. Then for every $x_1>x_0\geq \alpha_\beta$ the geodesic in $\Delta$ joining $x_0$ and $x_1$ is contained in $V(\beta)$.

\smallskip

Fix $x_0\geq  \alpha_\beta$ and $x_1>x_0$. Let $\sigma:[0,1]\to \Delta$ be the geodesic for $\Delta$ such that $\sigma(0)=x_0$ and $\sigma(1)=x_1$. Assume that $\sigma([0,1])$ is not contained in $V(\beta)$. Hence, there exist $0< t_1\leq t_2< 1$ such that $\sigma(t_j)\in \partial V(\beta)$, $j=1,2$ and $\{\sigma(t) : t_1\leq t\leq t_2\}\cap V(\beta)=\emptyset$. Since $V(\beta)$ disconnects $\Ha$ in two connected components, we can assume without loss of generality that $\sigma(t_1)=y_1e^{i\beta}$ and $\sigma(t_2)=y_2e^{i\beta}$ for some $y_1, y_2>0$ (possibly $y_1=y_2$).


Denote $R:=\{r: r>0\}$. Let $\gamma_1$ be the segment in $R$ joining $x_0$ and $y_1$, namely, if $y_1\geq x_0$, let $\gamma_1:=\{r: x_0\leq r\leq y_1\}$, while, if $y_1<x_0$, let $\gamma_1:=\{r: y_1\leq r\leq x_0\}$. Let $\gamma_2=\{y_1e^{i\theta}: \theta\in [0,\beta]\}$. Let $\gamma_3$ be the segment on $\partial V(\beta)$ joining $\sigma(t_1)=y_1e^{i\beta}$ with $\sigma(t_2)=y_2e^{i\beta}$, {\sl i.e.}, if for instance $y_1\leq y_2$, $\gamma_3:=\{re^{i\beta}: y_1\leq r\leq y_2\}$. Then, let $\gamma_4:=\{y_2e^{i\theta}: \theta\in [0,\beta]\}$. Finally, let $\gamma_5$ be the segment on $R$ joining $y_2$ with $x_1$.

Let $\Gamma=\gamma_1\cup\gamma_2\cup\gamma_3\cup\gamma_4\cup\gamma_5$. Hence, $\Gamma$ is a piecewise smooth curve in $\Ha$ which joins $x_0$ and $x_1$.

Now, since $\sigma$ is a geodesic in $\Delta$,  $\Delta\subset \Ha$ and by Lemma \ref{Lem:hyper-semipiano}(5),
\begin{equation*}
\begin{split}
\ell_\Delta(\sigma)&=k_\Delta(x_0,y_1e^{i\beta})+k_\Delta(y_1e^{i\beta},y_2e^{i\beta})+k_\Delta(y_2e^{i\beta},x_1)\\
&\geq k_{\Ha}(x_0,y_1e^{i\beta})+k_{\Ha}(y_1e^{i\beta},y_2e^{i\beta})+k_{\Ha}(y_2e^{i\beta},x_1)\\
&\geq k_{\Ha}(x_0,y_1e^{i\beta})+k_{\Ha}(y_1,y_2)+k_{\Ha}(y_2e^{i\beta},x_1)\\
&= k_{\Ha}(x_0,y_1)+k_{\Ha}(y_1,y_2)+k_{\Ha}(y_2,x_1)\\&\qquad +[k_{\Ha}(x_0,y_1e^{i\beta})-k_{\Ha}(x_0,y_1)]+[k_{\Ha}(y_2e^{i\beta},x_1)-k_{\Ha}(y_2,x_1)]\\&\geq k_{\Ha}(x_0,x_1)+[k_{\Ha}(x_0,y_1e^{i\beta})-k_{\Ha}(x_0,y_1)]+[k_{\Ha}(y_2e^{i\beta},x_1)-k_{\Ha}(y_2,x_1)]\\& \geq k_{\Ha}(x_0,x_1)+\log\frac{1}{\cos \beta},
\end{split}
\end{equation*}
where the penultimate inequality follows from the triangle inequality, and the last inequality follows from  Lemma \ref{Lem:hyper-semipiano}(2). Moreover,  a direct computation shows that
\begin{equation*}
k_{\Ha}(x_0,x_1)=k_{U}(x_0,x_1)-\frac{1}{2}\log\frac{1-\frac{1}{x_1}}{1-\frac{1}{x_0}}.
\end{equation*}
Therefore, taking into account that $U\subset \Delta$, from the previous inequality we have
\begin{equation*}
\begin{split}
k_\Delta(x_0,x_1)&=\ell_\Delta(\sigma)\geq k_{U}(x_0,x_1)-\frac{1}{2}\log\frac{1-\frac{1}{x_1}}{1-\frac{1}{x_0}}+\log\frac{1}{\cos \beta}\\
&\geq k_\Delta(x_0,x_1)-\frac{1}{2}\log\frac{1-\frac{1}{x_1}}{1-\frac{1}{x_0}}+\log\frac{1}{\cos \beta},
\end{split}
\end{equation*}
which forces
\[
\log\frac{1}{\cos^2 \beta}\leq \log\frac{1-\frac{1}{x_1}}{1-\frac{1}{x_0}}.
\]
However, if $x_0\geq \alpha_\beta$,
\[
\log\frac{1}{\cos^2 \beta}\leq \log\frac{1-\frac{1}{x_1}}{1-\frac{1}{x_0}}\leq \log\frac{1-\frac{1}{x_1}}{\cos^2\beta},
\]
getting a contradiction, and Step 2 follows.

\smallskip

{\sl Step 3.} Let $\beta\in (0,\pi/4)$ and let $2\leq x_0<x_1$. Let $\sigma:[0,1]\to \Delta$ be a geodesic for $\Delta$ such that $\sigma(0)=x_0$ and $\sigma(1)=x_1$. Let $K(\beta)$ be the constant defined in Step 1 and let $c\in (0, x_0e^{-K(\beta)})$. Suppose $\sigma([0,1])\subset V(\beta)$. Then $|\sigma(t)|> c$ for all $t\in [0,1]$.

\smallskip

Assume by contradiction that there exists $t_1\in (0,1)$ such that $|\sigma(t_1)|=  c$. Then $\sigma(t_1)=c e^{i\theta_1}$  for some $\theta_1\in (-\beta,\beta)$. Moreover,  by continuity of $\sigma$, there exist $\tilde t_1\in [0,t_1)$ and $\tilde t_2\in (t_1, 1)$ such that $\sigma(\tilde t_1)=x_0e^{i\tilde\theta_1}$, $\sigma(\tilde t_2)=x_0e^{i\tilde\theta_2}$ for some $\tilde\theta_1, \tilde\theta_2\in (-\beta,\beta)$ and $|\sigma(t)|\leq x_0$ for all $t\in [\tilde t_1,\tilde t_2]$.

Now, since $\sigma$ is a geodesic in $\Delta$, and $\Delta\subset \Ha$,
\begin{equation*}
\begin{split}
k_\Delta(x_0e^{i\tilde\theta_1}, x_0e^{i\tilde\theta_2})&=\ell_\Delta(\sigma;[\tilde t_1, \tilde t_2])= \ell_\Delta(\sigma;[ \tilde t_1, t_1])+\ell_\Delta(\sigma;[ t_1, \tilde t_2])\\&=k_\Delta(x_0e^{i\tilde\theta_1}, ce^{i\theta_1})+k_\Delta(ce^{i\theta_1},x_0e^{i\tilde\theta_2})\\
& \geq k_{\Ha}(x_0, c)+k_{\Ha}(c,x_0)=2k_{\Ha}(x_0, c)=\log\frac{x_0}{c},
\end{split}
\end{equation*}
where the last inequality follows from Lemma \ref{Lem:hyper-semipiano}(5).

On the other hand, since $U\subset \Delta$, and by Step 1,
\[
k_\Delta(x_0e^{i\tilde\theta_1}, x_0e^{i\tilde\theta_2})\leq k_{U}(x_0e^{i\tilde\theta_1}, x_0e^{i\tilde\theta_2})\leq K(\beta).
\]
 Hence, $\log\frac{x_0}{c}\leq K(\beta)$, which contradicts the choice of $c$ and Step 3 follows.

\smallskip

{\sl Step 4.} For every $\delta>0$ there exists $\mu_\delta\geq 2$ such that for every $x_1>x_0\geq \mu_\delta$, if  $\sigma:[0,1]\to \Delta$ is a geodesic of $\Delta$ such that $\sigma(0)=x_0$ and $\sigma(1)=x_1$, then for every $x\in [x_0, x_1]$ there exists $t_x\in [0,1]$ such that $k_\Delta(x, \sigma(t_x))<\delta$.

\smallskip

In order to prove Step 4, we first claim that for every $\nu\in (0,\frac{\pi}{4})$ there exists $\mu_\nu\geq 2$ such that for every $x_1>x_0\geq \mu_\nu$, $\sigma([0,1])\subset V(\nu)+1$.

If the claim is true, since $\sigma$ is continuous, for every $x\in [x_0,x_1]$ there exist $|\theta_x| <\nu$ and $t_x\in [0,1]$ such that
$\sigma(t_x)=(x-1)e^{i\theta_x}+1$.  Hence, by \eqref{Eq:basic-move-HU} and  Lemma \ref{Lem:hyper-semipiano}(4), and recalling that  $U\subset \Delta$,
\begin{equation*}
\begin{split}
k_{\Delta}(\sigma(t_x), x)&\leq k_{U}((x-1)e^{i\theta_x}+1, (x-1)+1)=k_\Ha((x-1)e^{i\theta_x}, x-1)\\&=k_\Ha(e^{i\theta_x},1)< k_\Ha(e^{i\nu}, 1).
\end{split}
\end{equation*}
Since $k_\Ha(e^{i\nu}, 1)\to 0$ as $\nu\to 0$, Step 4 follows.

In order to prove the claim, given $\nu\in (0,\frac{\pi}{4})$, let $\beta\in (0,\nu)$. Hence, there exists $\alpha>1$ such that $V(\beta)\cap\{w\in U: |w|>\alpha\}\subset V(\nu)+1$. Let $\alpha_\beta$ be given by Step~2. Let $\mu_\nu>e^{K(\beta)}\max\{\alpha_\beta,\alpha\}$, where $K(\beta)$ is given by Step~1. Hence, by Step~2, for every $x_1>x_0>\mu_\nu$ the geodesic $\sigma$ for $k_\Delta$ joining $x_0, x_1$ is contained in $V(\beta)$. By Step~3, $\sigma([0,1])\subset \{w: |w|>\alpha\}$, hence, $\sigma$ is contained in $V(\nu)+1$.

\smallskip

Let $\xi\in \partial \D$ be such that $\lim_{t\to+\infty}f^{-1}(t)=\xi$ (see \cite[Page 162]{Shabook83}). Let $\gamma:[0,1)\to \D$ be the geodesic of $\D$ defined by $\gamma(t)=t\xi$.

\smallskip

{\sl Step 5.}  For every  $\epsilon>0$ there exists $t_\epsilon>0$ such that $f^{-1}(t)\in S_\D(\gamma, \epsilon)$ for all $t\geq t_\epsilon$.

\smallskip

Fix $\epsilon>0$. Let $\delta=\frac{\epsilon}{3}$ and let $\mu_\delta\geq 2$ be the point defined in Step 4. Let $\{x_n\}$ be an increasing sequence of positive real numbers converging to $+\infty$. Let $\sigma_n:[0, R_n]\to \Delta$ be the geodesic in $\Delta$ parameterized by arc length such that $\sigma_n(0)=\mu_\delta$ and $\sigma_n(R_n)=x_n$. Using a normality argument, up to extracting a subsequence, we can assume that $\{\sigma_n\}$ converges uniformly on compacta of $[0,+\infty)$ to a geodesic $\sigma:[0,+\infty)\to \Delta$, parameterized by arc length such that $\sigma(0)=\mu_\delta$ and $\lim_{s\to +\infty}\sigma(s)=\underline{y}\in \partial_C\Delta$ in the Carath\'eodory topology of $\Delta$.

In particular, for every fixed $T>0$ there exists $n_{T}\in \N$ such that for every $n\geq n_{T}$ we have $R_n\geq T$  and for every $s\in [0, T]$,
\begin{equation}\label{Eq:conv-u-g-orth}
k_\Delta(\sigma_n(s), \sigma(s))<\delta.
\end{equation}
By Step 4, for every $t\in [\mu_\delta, x_n]$ there exists $s_t^n\in [0, R_n]$ such that $k_\Delta(\sigma_n(s_t^n), t)<\delta$.

We claim that, for every fixed $x_1>\mu_\delta$ there exists $C_{x_1}>0$   such that for all $n\in \N$ and all $t\in [\mu_\delta, x_1]$, we have $s_t^n\leq C_{x_1}$. Indeed, since $[\mu_\delta, x_1]$ is compact in $\Delta$, $C_0:=\max_{x\in [\mu_\delta, x_1]}k_\Delta(x, \mu_\delta)<+\infty$. Hence, recalling that $\sigma_n$ is parameterized by arc length, for all $t\in [\mu_\delta, x_1]$, we have
\begin{equation*}
s_t^n=k_\Delta(\sigma_n(s_t^n), \sigma_n(0))=k_\Delta(\sigma_n(s_t^n), \mu_\delta)\leq k_\Delta(\sigma_n(s_t^n), t)+k_\Delta(t, \mu_\delta)\leq \delta+C_0=:C_{x_1}.
\end{equation*}

Therefore, fix $x_1>\mu_\delta$, and set $T:=C_{x_1}$. By \eqref{Eq:conv-u-g-orth}, for all $t\in [\mu_\delta, x_1]$ we have
\[
k_\Delta(\sigma(s_t^{n_T}), t)\leq k_\Delta(\sigma(s_t^{n_T}), \sigma_{n_T}(s_t^{n_T}))+k_\Delta(\sigma_{n_T}(s_t^{n_T}), t)<2\delta.
\]
By the arbitrariness of $x_1$, this proves that $t\in S_\Delta(\sigma,2\delta)$ for all $t\geq \mu_\delta$.

Since $f$ is an isometry for the hyperbolic distance, $f^{-1}\circ \sigma$ is a geodesic in $\D$ parameterized by arc length and $f^{-1}(t)\in S_\D(f^{-1}\circ \sigma;2\delta)$ for all $t\geq \mu_\delta$.
In particular, for every $t\geq \mu_\delta$ there exists $s_t\in [0,+\infty)$ such that
\begin{equation}\label{Eq:distanza-minima-geo-11}
k_\D(f^{-1}(t), f^{-1}(\sigma(s_t)))<2\delta.
\end{equation}
Note that this implies in particular that $s_t\to+\infty$ as $t\to +\infty$. Hence, $\lim_{t\to+\infty}f^{-1}(\sigma(s_t))=\xi$ and $\lim_{t\to +\infty}f^{-1}(\sigma(t))=\xi$.

Using a Cayley transform from $\D$ to $\Ha$ which maps $\xi$ to $\infty$, it follows that there exists $s_1\geq 0$ such that $f^{-1}(\sigma(s))\in S_\D(\gamma;\delta)$ for all $s\geq s_1$.  In particular, for every $s\geq s_1$, there exists $r_s\in [0,1)$ such that
\begin{equation}\label{Eq:distanza-minima-geo-12}
k_\D(f^{-1}(\sigma(s)), \gamma(r_s))<\delta.
\end{equation}
Let $t_{\epsilon}\geq \mu_\delta$ be such that $s_t\geq s_1$ for all $t\geq t_\epsilon$. Then by \eqref{Eq:distanza-minima-geo-11} and \eqref{Eq:distanza-minima-geo-12}, for all $t\geq t_\epsilon$,
\[
k_\D(\gamma(r_{s_t}), f^{-1}(t))\leq k_\D(f^{-1}(\sigma(s_t)), \gamma(r_{s_t}))+k_\D(f^{-1}(t), f^{-1}(\sigma(s_t)))<3\delta=\epsilon,
\]
and Step 5 follows.

\smallskip

From Step 5 we see that $\limsup_{t\to+\infty}k_\D(f^{-1}(t), [0,1)\xi)=0$, hence, $f^{-1}(t)$ converges to $\xi$ orthogonally.
\end{proof}

Now, we may prove Theorem \ref{Thm:main}:

\begin{proof}[Proof of Theorem \ref{Thm:main}]
If $\{z_n\}\subset \Delta$ is such that $\{f^{-1}(z_n)\}$ converges orthogonally to some $\sigma\in \partial \D$, then the result follows at once taking $U=\Delta$ and any $R>0$. Indeed, by Proposition \ref{Prop:invariance} we can assume $\Delta=U=\D$, and the statement is immediate.

Conversely,  assume that Conditions (1) and (2) of Theorem~\ref{Thm:main} are satisfied. By Proposition \ref{Prop:invariance}, we can assume $U=\Ha$ and $\gamma(t)=t$, for every $t\geq 1$. Since $E^{\mathbb H}_{1}(\infty,R)=\{z \in \mathbb C : \Re \ w > R\}= \mathbb H + R$, then $\mathbb H  + R \subset \Delta \subset \mathbb H$ by Condition (1) and it follows from Proposition \ref{orthog-in-semipl} that there exists $\sigma\in \partial \D$ such that $f^{-1}(\gamma(t))\to \sigma$ orthogonally as $t\to+\infty$.
In particular, this implies that, if $\eta:[0,1)\to \D$ is the geodesic $\eta(t)=t\sigma$, then for every $\epsilon>0$ there exists $t_0\geq 1$ such that for all $t\geq t_0$,
\begin{equation}\label{Eq:gamma-conv-rad-s}
k_\D(f^{-1}(\gamma(t)), \eta([0,1)))<\epsilon.
\end{equation}
On the other hand, by Condition (2) of Theorem~\ref{Thm:main} and by Lemma \ref{Lem:contril-hyp-horo}, for every $\epsilon>0$ there exists $n_0$ such that for all $n\geq n_0$
\begin{equation}\label{Eq:gamma-conv-rad-s2}
k_\D(f^{-1}(z_n), f^{-1}(\gamma([0,+\infty))))=k_\Delta(z_n,\gamma([0,+\infty)))\leq k_{E^{\mathbb H}_{1}(\infty, R)}(z_n, \gamma([0,+\infty)))<\epsilon.
\end{equation}
Note that, since $\{z_n\}$ is compactly divergent in $\Delta$ --- and so is $\{f^{-1}(z_n)\}$ in $\D$ --- this implies in particular that $\{f^{-1}(z_n)\}$ converges to $\sigma$.

By the triangle inequality, \eqref{Eq:gamma-conv-rad-s} and \eqref{Eq:gamma-conv-rad-s2} imply that for every $\epsilon>0$ there exists $n_1$ such that
$k_\D(f^{-1}(z_n), \eta([0,1)))<\epsilon$  for all $n\geq n_1$. Therefore, $\{f^{-1}(z_n)\}$ converges orthogonally to $\sigma$.
\end{proof}

\section{Applications to semigroups}\label{semigroups}

In this section we state some direct but interesting consequences of Theorem \ref{Thm:main} for the dynamics of semigroups in $\D$. Similar results hold for discrete dynamics of holomorphic self-maps of $\D$. We leave details to the interested reader.

A {\sl one-parameter continuous semigroup} of holomorphic self-maps of $\D$---or, for short, semigroup in $\D$---is a continuous homomorphism of the real semigroup $[0,+\infty)$ endowed with the Euclidean topology to the semigroup under composition of holomorphic self-maps of $\D$ endowed with the topology of uniform convergence on compacta. Semigroups in $\D$ have been largerly studied (see, {\sl e.g.}, \cite{Abate,Berkson-Porta, Shb}). It is known that, if $(\phi_t)$ is a semigroup in $\D$, which is not a group of hyperbolic rotations, then there exists $\tau\in \oD$, the {\sl Denjoy-Wolff point} of $(\phi_t)$, such that $\lim_{t\to +\infty}\phi_t(z)=\tau$, and the convergence is uniform on compacta. In case $\tau\in \D$, the semigroup is called elliptic. Non-elliptic semigroups can be divided into three types: hyperbolic, parabolic of positive hyperbolic step and parabolic of zero hyperbolic step. It is known (see \cite{CD, CDP}) that if $(\phi_t)$ is a hyperbolic semigroup then $\{\phi_t(z)\}$ always converges non-tangentially to its Denjoy-Wolff point as $t\to +\infty$ for every $z\in \D$, while, if it is parabolic of positive hyperbolic step then $\{\phi_t(z)\}$ always converges tangentially to its Denjoy-Wolff point as $t\to +\infty$ for every $z\in \D$.

In case of parabolic semigroups of zero hyperbolic step, the behavior of trajectories can be rather wild. All the trajectories have the same slope, that is the cluster set of $\arg(1-\overline{\tau}\phi_t(z))$ as $t\to +\infty$---which is a compact subset of $[-\pi/2,\pi/2]$---does not depend on $z$ (see \cite{CD, CDP}). In most cases this slope is just a point, but in  \cite{Bet, CDG} examples are constructed  such  that the slope is the full interval $[-\pi/2,\pi/2]$ and in \cite{Bet, BCDG} examples are constructed where the slope is a closed subset of $(-\pi/2,\pi/2)$ not reduced to a single point.

Recall that (see, {\sl e.g.}, \cite{Abate, BrAr, Cowen, BCD})  $(\phi_t)$ is a parabolic semigroup in $\D$ of zero hyperbolic step if and only if  there exists a univalent function $h$, the {\sl K\"onigs function} of $(\phi_t)$ such that $h(\D)$ is starlike at infinity, $h(\phi_t(z))=h(z)+it$ for all $t\geq 0$ and $z\in \D$, and for every $w\in \C$ there exists $t_0\geq 0$ such that $w+it_0\in h(\D)$. The triple $(\C, h, z+it)$ is called a {\sl canonical model} for $(\phi_t)$ and it is essentially unique.

Using Theorem \ref{Thm:main} applied to $h(\D)$, we can find geometric conditions that guarantee that the slope reduces to $\{0\}$.

We recall that for $\beta\in (0,\pi)$, $V(\beta):=\{\rho e^{i\theta}: \rho>0, |\theta|<\beta\}$.

\begin{corollary}\label{cor}
Let $(\phi_t)$ be a parabolic semigroup in $\D$ of zero hyperbolic step and with canonical model $(\C, h, z+it)$ and Denjoy-Wolff point $\tau\in \partial \D$. If
\begin{enumerate}
\item either there exists $a>0$ such that $(i\Ha+ia)\subset \Delta \subset i\Ha$,
\item or, there exist $\beta\in (0,\pi)$, $a>0$, such that $(iV(\beta)+ia)\subset \Delta \subset i V(\beta)$,
\item  or, there exist $-\infty<a<b<+\infty$ and $c\in \R$ such that $\partial \Delta$ is contained in the semistrip $\{\zeta\in \C: a<\Re \zeta<b, \Im \zeta<c\}$,
\end{enumerate}
then $\phi_t(z)\to \tau$ orthogonally as $t\to+\infty$, for every $z\in \D$.
\end{corollary}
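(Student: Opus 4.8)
The plan is to read the dynamics off the K\"onigs domain $\Delta:=h(\D)$ and feed it into Theorem~\ref{Thm:main}. Writing $f:=h$, the relation $h\circ\phi_t=h+it$ says precisely that $\phi_t(z)=f^{-1}(h(z)+it)$, so each trajectory of $(\phi_t)$ is the $f$-pullback of a vertical ray $\{w_0+it:t\ge0\}$ — automatically contained in $\Delta$, which is starlike at infinity — and by the Denjoy--Wolff theorem such a pullback tends to $\tau$, so any orthogonal limit produced below is forced to be $\tau$. Since the slope of $(\phi_t)$ is independent of the base point \cite{CD,CDP}, it suffices to prove that one trajectory converges orthogonally. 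Thus for each of the three hypotheses I would exhibit a simply connected $U\supseteq\Delta$, a prime end $\underline{y}\in\partial_C U$ and $R>0$ for which condition~(1) of Theorem~\ref{Thm:main} holds, take for $\{z_n\}$ the points $\gamma(n)$ of a geodesic $\gamma$ of $U$ with $\gamma(t)\to\underline{y}$ and $\gamma(n)\in\Delta$ eventually (so condition~(2) is automatic), and arrange that for the chosen vertical-line geodesic $\gamma$ the curve $f^{-1}(\gamma(t))$ — whose orthogonal convergence Theorem~\ref{Thm:main} delivers — is a trajectory of $(\phi_t)$.

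In case (1), take $U:=i\Ha$ and $\underline{y}$ the prime end at $i\infty$: a computation as for $E^\Ha_1(\infty,R)$ gives $E^{i\Ha}_{i}(\underline{y},R)=\{z:\Im z>R\}$, so condition~(1) holds with $R:=a$ because $i\Ha+ia\subset\Delta\subset i\Ha$; the geodesic $\gamma(t):=i(a+1+t)$ lies in $\Delta$ and $f^{-1}(\gamma(t))$ is the trajectory through $h^{-1}(i(a+1))$, which therefore converges orthogonally, and slope-independence finishes. In case (3) the hypothesis is precisely that of Betsakos' theorem \cite{Bet} ($\Delta$ being starlike at infinity): as explained just after Theorem~\ref{Thm:main}, $\Delta$ lies in a Koebe domain $K=\C\setminus\{z:\Re z=s_0,\ \Im z\le s_1\}$, the semistrip condition forces $\Delta$ to contain a horocycle of $K$ at its prime end at infinity, and $t\mapsto s_0+it$ is a geodesic of $K$ for large $t$ converging to that prime end; Theorem~\ref{Thm:main} with $U:=K$ gives orthogonal convergence of $f^{-1}(s_0+it)$, hence of every trajectory.

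Case (2) is where the work lies. Even though $\Delta\subset iV(\beta)$ suggests using $U:=iV(\beta)$ (with $\underline{y}$ its unique prime end at infinity, the one the vertical flow runs into) — and condition~(2) would then be tractable, since rotating by $-i$ and straightening $V(\beta)$ to $\Ha$ by $w\mapsto w^{\pi/(2\beta)}$ turns $\gamma$ into the positive real axis where Lemma~\ref{Lem:hyper-semipiano} applies — condition~(1) is the problem: one must fit a horocycle of $iV(\beta)$ inside $\Delta$, and all that is given is $iV(\beta)+ia\subset\Delta$. For a narrow sector ($\beta<\pi/2$) the horocycle of $iV(\beta)$ genuinely leaks out of $iV(\beta)+ia$ (a scaling computation: at Euclidean radius $r$ it omits only an angular neighbourhood of $\partial(iV(\beta))$ of width $\sim r^{-\pi/(2\beta)}$, too small against the $\sim r^{-1}$ omitted by $iV(\beta)+ia$), so no sectorial $U$ works, while for wide sectors $\beta\ge\pi/2$ the same estimate does give containment for $R$ large. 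I would therefore prove the sector version of Proposition~\ref{orthog-in-semipl} — if $V(\beta)+a\subset\Delta\subset V(\beta)$ then $f^{-1}(t)$ converges orthogonally to a point of $\partial\D$ — and apply it, after the rotation $iV(\beta)\mapsto V(\beta)$ and using slope-independence, to the trajectory along the real axis (which meets $\Delta$ since $V(\beta)+a$ does). Its proof runs parallel to that of Proposition~\ref{orthog-in-semipl}: $\Gamma:t\mapsto t$ is a quasi-geodesic of $\Delta$ (bound $\ell_\Delta(\Gamma)$ above by $\ell_{V(\beta)+a}(\Gamma)$ and $k_\Delta$ below by $k_{V(\beta)}$, both computed through the power map), hence shadowed by a geodesic $\sigma$ of $\Delta$ (Gromov's lemma); the crux — the sectorial analogue of Steps~1--5 there — is to show $\sigma$ comes $k_\Delta$-close to $\Gamma$, with the $\Ha$-estimates of Lemma~\ref{Lem:hyper-semipiano} and the auxiliary domain $\Ha+1$ replaced by their $V(\beta)$-counterparts obtained by transport through $w\mapsto w^{\pi/(2\beta)}$. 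This sectorial shadowing estimate is the main difficulty of the whole corollary.
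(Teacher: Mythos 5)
Your treatment of cases (1) and (3) coincides with the paper's: the paper takes $U=i\Ha$ (resp.\ the Koebe domain $\C\setminus\{\Re\zeta=\Re p,\ \Im\zeta\leq\Im p\}$ for $p\in\partial\Delta$), verifies condition (1) of Theorem \ref{Thm:main} by computing the horocycle at the prime end at infinity, and concludes as you do. The interesting divergence is case (2). The paper disposes of it in one line, claiming it ``follows straightforwardly'' from Theorem \ref{Thm:main} with $U=iV(\beta)$, computing horocycles via the power map $\Ha\to V(\beta)$. Your scaling computation shows why this cannot be taken at face value when $\beta<\pi/2$: the horocycle $E^{iV(\beta)}(\infty,R)=i\bigl(\Ha+R\bigr)^{2\beta/\pi}$ has boundary asymptotic (in Euclidean distance) to $\partial(iV(\beta))$ when $2\beta/\pi<1$ --- e.g.\ for $\beta=\pi/4$ it is the region $\{x>0,\ x^2-y^2>R\}$ rotated by $i$, whose boundary hyperbola hugs the edges of the sector --- whereas $iV(\beta)+ia$ stays at Euclidean distance $a\sin\beta$ from $\partial(iV(\beta))$. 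So for $\Delta=iV(\beta)+ia$ no $R$ makes condition (1) of Theorem \ref{Thm:main} hold with $U=iV(\beta)$, and no wider sector or half-plane works either. Your observation that the obstruction disappears for $\beta\geq\pi/2$ (take $R$ large) is also correct. In short, you have identified a genuine gap in the paper's own argument for narrow sectors, not merely chosen a different route.

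Your proposed repair --- a sectorial analogue of Proposition \ref{orthog-in-semipl}: if $V(\beta)+a\subset\Delta\subset V(\beta)$ then $f^{-1}(t)$ converges orthogonally --- is the right statement, and the estimates do transport: with $c=2\beta/\pi$ one has $k_{V(\beta)}(x_0,x_1)=\tfrac1{2c}\log(x_1/x_0)$, $k_{V(\beta)+a}(x_0,x_1)=\tfrac1{2c}\log\tfrac{x_1-a}{x_0-a}$, so the crucial cancellation in Step 2 of Proposition \ref{orthog-in-semipl} (the difference $k_{V(\beta)}-k_{V(\beta)+a}$ along the axis tending to $0$) survives, as do the analogues of Lemma \ref{Lem:hyper-semipiano}(2)--(5) obtained by conjugating with $w\mapsto w^{1/c}$. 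The one caveat is that this sectorial shadowing argument is exactly where the work lies and you only sketch it; as written, your proof of case (2) is a credible plan rather than a complete proof. Everything else --- reading trajectories as pullbacks of vertical rays via $\phi_t=h^{-1}(h(\cdot)+it)$, invoking base-point independence of the slope to reduce to a single trajectory, and the verification of condition (2) of Theorem \ref{Thm:main} via Lemma \ref{Lem:hyper-semipiano} --- is correct and consistent with the paper.
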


Statement (3) in Corollary~\ref{cor} was proved by D. Betsakos in \cite{Bet}, as a consequence of Theorem~2 in \cite{Bet}.

\begin{proof}
Statements (1) and (2) follow straightforwardly from Theorem \ref{Thm:main}, considering, respectively, $U=\Ha$ and $U=i V(\beta)$ and, for (2), computing horocycles in $i V(\beta)$ using a Riemann map from $\Ha$ to $i V(\beta)$.

As for (3), since $\Delta$ is starlike at infinity, if $p\in \partial \Delta$, then
\[
\Delta\subset \C\setminus \{\zeta\in \C: \Re \zeta=\Re p, \Im \zeta\leq \Im p\}=:U.
\]
Therefore, using the Koebe map from $\D$ onto $U$ in order to compute horocycles in $U$, one obtains (3) directly from Theorem \ref{Thm:main}.
\end{proof}

\end{document}